\let\c@author\relax
\def\ps@pprintTitle{%
 \let\@oddhead\@empty
 \let\@evenhead\@empty
 \def\@oddfoot{}%
 \let\@evenfoot\@oddfoot}
\definecolor{ao}{rgb}{0.0, 0.5, 0.0}
\setlist[enumerate,1]{label={(\arabic*)},
                   ref  ={(\arabic*)}}
\newlist{propenum}{enumerate}{1} 
\setlist[propenum]{label=\arabic*), ref=\theproposition~(\arabic*)}
\newcommand{\nn}[1]{\mathbb{N}^{#1}}
\newcommand{\rr}[1]{\mathbb{R}^{#1}}
\newcommand{\cc}[1]{\mathbb{C}^{#1}}
\newcommand{\zzp}[1]{\mathbb{Z}_+^{#1}}
\newcommand{\innerProd}[2]{\left\langle{#1},{#2}\right\rangle}
\newcommand{\eabs}[1]{\langle{#1}\rangle}     
\newcommand{\dee}{\mathop{\mathrm{d}\!}}
\newcommand{\Frechet}{Fr\'{e}chet}
\NewDocumentCommand{\abs}{s o m}{
    \ensuremath{
    \IfBooleanTF{#1}
        {\left|}
        {\IfValueTF{#2}
            {#2|}
            {|}
        }
    {#3}
    \IfBooleanTF{#1}
        {\right|}
        {\IfValueTF{#2}
            {#2|}
            {|}
        }
    }
}
\NewDocumentCommand{\seminorm}{s o m o}{
    \ensuremath{
    p
    \IfValueT{#4}{_{#4}}
    \IfBooleanTF{#1}
        {\left(}
        {\IfValueTF{#2}
            {#2)}
            {(}
        }
    {#3}
    \IfBooleanTF{#1}
        {\right)}
        {\IfValueTF{#2}
            {#2)}
            {)}
        }
    }
}
\NewDocumentCommand{\norm}{s o m o}{
    \ensuremath{
    \IfBooleanTF{#1}
        {\left\|}
        {\IfValueTF{#2}
            {#2\|}
            {\|}
        }
    {#3}
    \IfBooleanTF{#1}
        {\right\|}
        {\IfValueTF{#2}
            {#2\|}
            {\|}
        }
    }
    \IfValueT{#4}{_{#4}}
}
\NewDocumentCommand{\Fourier}{s o m}{
    \ensuremath{
        \IfValueTF{#2}{
            \F^{#2}
        }{
            \F
        }
        \IfBooleanTF{#1}{
            \left\{
        }{
            \{
        }
        #3
        \IfBooleanTF{#1}{
            \right\}
        }{
            \}
        }
    }
}
\NewDocumentCommand{\FourierInv}{s m}{
    \IfBooleanTF{#1}{
        \Fourier*[-1]{#2}
    }{
        \Fourier[-1]{#2}
    }
}
\newcommand{\conv}[0]{*}
\newcommand{\chF}[1]{\chi_{#1}}
\newcommand{\ch}[2]{{\chF{#1}}({#2})}
\newcommand{\set}[1]{\mathcal{#1}}
\NewDocumentCommand{\sobolev}{s m o o}{
    \IfBooleanTF{#1}
        {
            H^{#2}
        }
        {
            W^{#2}
        }
        \IfValueT{#3}
            {(#3\IfValueT{#4}{;#4})}
    }
\NewDocumentCommand{\schwartz}{o}{\mathscr{S}
    \IfValueT{#1}{(#1)}
}
\NewDocumentCommand{\GS}{s o m o}{\IfBooleanTF{#1}{\ensuremath{\Sigma}}{S}\IfValueT{#2}{_{#2}}^{#3}\IfValueT{#4}{(#4)}}
\NewDocumentCommand{\hormander}{m m o}{\mathbf{S}_{#1}^{#2}\IfValueT{#3}{(#3)}}
\NewDocumentCommand{\F}{}{\mathscr{F}}
\NewDocumentCommand{\FL}{m m o}{\F L^{#1}(#2\IfValueT{#3}{;#3})}
\NewDocumentCommand{\barron}{s m o}{
    \IfBooleanTF{#1}{\set{B}^*}{\set{B}}_{#2}\IfValueT{#3}{(#3)}
}
\newcommand{\nNeuron}{N}
\newcommand{\targetSpace}[0]{\set{V}}
\numberwithin{equation}{section}
\newtheorem{theorem}{Theorem}[section]
\newtheorem{definition}[theorem]{Definition}
\newtheorem{corollary}[theorem]{Corollary}
\newtheorem{lemma}[theorem]{Lemma}
\newtheorem{proposition}[theorem]{Proposition}
\theoremstyle{remark}
\newtheorem{remark}[theorem]{Remark}
\newtheorem*{remark*}{Remark}
\begin{document}

\begin{frontmatter}

\title{Approximation Rates in \Frechet{} Metrics: Barron Spaces, Paley-Wiener Spaces, and Fourier Multipliers}
\author[label1]{Ahmed Abdeljawad\corref{cor1}}
\ead{ahmed.abdeljawad@oeaw.ac.at}
\author[label1]{Thomas Dittrich}
\ead{thomas.dittrich@oeaw.ac.at}
    
\affiliation[label1]{organization={Johann Radon Institute for Computational and Applied Mathematics (RICAM), Austrian Academy of Sciences},
            addressline={Altenberger Straße 69}, 
            city={Linz},
            postcode={4040}, 
            country={Austria}}
	

\begin{abstract}
    Operator learning is a recent development in the simulation of partial differential equations by means of neural networks.
    The idea behind this approach is to learn the behavior of an operator, such that the resulting neural network is an approximate mapping in infinite-dimensional spaces that is capable of (approximately) simulating the solution operator governed by the partial differential equation.
    In our work, we study some general approximation capabilities for linear differential operators by approximating the corresponding symbol in the Fourier domain.
    Analogous to the structure of the class of Hörmander-Symbols, we consider the approximation with respect to a topology that is induced by a sequence of semi-norms.
    In that sense, we measure the approximation error in terms of a \Frechet{} metric, and our main result identifies sufficient conditions for achieving a predefined approximation error.
    Secondly, we then focus on a natural extension of our main theorem, in which we manage to reduce the assumptions on the sequence of semi-norms.
    Based on existing approximation results for the exponential spectral Barron space, we then present a concrete example of symbols that can be approximated well.
\end{abstract}



\begin{keyword}
Neural Networks\sep Approximation Rates\sep Operator Learning\sep Symbol Approximation\sep Barron Spaces\sep \Frechet{} Spaces\sep Paley-Wiener Spaces
\MSC[2020] 41A25\sep 41A46\sep 41A65\sep 46E10\sep 68T05\sep 68T07
\end{keyword}

\end{frontmatter}

\section{Introduction}\label{sec:intro}

The field of \emph{Neural Operator} learning has gained a surge of attention over the last four years \cite{Kovachki23NeuralOperatorLearning,DeRyck22Genericboundsapproximation, Schwab23Deepoperatornetwork, Hua23Basisoperatornetwork, Lanthaler23OperatorlearningPCANet, Lanthaler23OperatorlearningPCANet, Kovachki24OperatorLearningAlgorithms,Lu21LearningNonlinearOperators, Li20FourierNeuralOperator, Boulle23MathematicalGuideOperator}.
Unlike traditional numerical approaches, which solve partial differential equations (PDEs) or other functional relationships explicitly, operator learning leverages data-driven methods, particularly neural operators, to approximate mappings between infinite-dimensional spaces. Neural operator architectures, including Deep Operator Networks (DeepONet) \cite{Lu21LearningNonlinearOperators, Lanthaler22ErrorEstimatesDeepONets}, Fourier Neural Operators (FNOs) \cite{Li20FourierNeuralOperator},  Principal Component Analysis Neural Networks  (PCA-Net) \cite{Lanthaler23OperatorlearningPCANet}, Fourier Neural Mappings (FNMs) \cite{Huang24operatorlearningperspective}, extend the capabilities of deep learning to handle operators efficiently. 
With growing theoretical and empirical evidences of success, operator learning has become a very promising surrogate model to handle infinite dimensional problems.

The starting point for operators approximation with neural networks was by Chen and Chen  in 1995  \cite{Chen95UniversalApproximationNonlinear}, where a universal approximation theorem was proved for operators approximation through neural networks where the  operators assumed to be continuous on compact set with a compact range.
Roughly, in \cite{Chen95UniversalApproximationNonlinear}, authors showed that if $V$ is a compact subset of $C(K_1)$ and $K_1$ is a compact set of a given Banach space $X$, and $K_2$ is a compact set in $\rr{d}$ then for any non linear continuous operator $G: V\to C(K_2)$ there exists a neural network \(\Psi[\theta]\), such that 
\[
\begin{gathered}
\abs{G(u)(y)-\Psi[\theta](u)(y)}<\epsilon
\end{gathered}
\]
where the network has the following form
\[
\Psi[\theta](u)(y)
=
\sum_{k=1}^N \sum_{i=1}^M c_i^k g\left(\sum_{j=1}^m \xi_{i j}^k u\left(x_j\right)+\zeta_i^k\right)
\cdot g\left(\omega_k \cdot y+b_k\right),
\]
and $g$ is an activation function.
The neural operator surrogate model \(\Psi[\theta]\) was extended to the so-called DeepONet in \cite{Lu21LearningNonlinearOperators}, where the architecture of the module is defined in terms of an encoder, decoder operators, combined with a finite-dimensional neural network between the latent finite-dimensional spaces.
More explicitly, the encoder can be seen as a general linear map $\mathcal{E}: \mathcal{U}\to \rr{n}, u\mapsto \mathcal{E}(u)$ for example encoding can be done by point evaluation at distinct ``sensor points" $x_k$, that is, $\mathcal{E}(u)= \{u(x_k)\}_{k=1}^n$ .
Whereas the decoder or ``reconstruction" $\mathcal{D}: \rr{m}\to \mathcal{Y}$  is given by expansion with respect to a neural network basis $\tau_1, \dots, \tau_m\in\mathcal{Y}$. Whereas, the finite-dimensional neural network between the latent finite-dimensional spaces can be defined as a parametrized function $\beta: \rr{n}\times \Theta_\beta \to \rr{m}$. Note that $\mathcal{U}$ and $\mathcal{Y}$ are Hilbert spaces. 
The parametrized DeepONet can be written as
\begin{align*}
    \Psi^*(u ; \theta)(y)=\sum_{j=1}^{m} \beta_j\left(\mathcal{E}(u) ; \theta_\beta\right) \tau_j\left(y ; \theta_\tau\right)
\end{align*}
where the branch network $\beta$ processes the input function, and the trunk network $\tau$ processes the coordinates.
A significant difference from shallow neural networks used for function approximation is the substitution of the coefficient term with a branch network, which itself shares similarities with a shallow neural network in structure.
Universal approximation results for DeepONets in different settings was established in \cite{Lu21LearningNonlinearOperators, Lanthaler22ErrorEstimatesDeepONets}.
For different architectures, universal approximation results have been also investigated e.g., \cite{Subedi24ErrorBoundsLearning,Kovachki23NeuralOperatorLearning,Lanthaler23OperatorlearningPCANet, Prasthofer22VariableInputDeepOperator,Jin22MIONetLearningMultipleInput,Lanthaler23NonlocalityNonlinearityImplies,Schwab23Deepoperatornetwork,Calvello24ContinuumAttentionNeural}.
A central argument in demonstrating that neural operators are universal approximators lies in the functional approximation, which plays a crucial role in the proof, see e.g., \cite{Chen95UniversalApproximationNonlinear}. Functional approximation was also investigated in \cite{Mhaskar97NeuralNetworksFunctional} where authors  uniformly approximated a class of nonlinear, continuous functionals defined on $L^p ([-1, 1]^d)$ through neural networks.

Just as the approximation of functionals leads to the approximation of operators, it can be observed that in certain scenarios where operators can be characterized through specific functions, it is sufficient to approximate these functions. Therefore, by definition, this approach inherently results in the approximation of the operator itself. These types of operators are typically associated with a symbol and can be identified through it. They are often, but not exclusively, linear operators.
In practice, it is most interesting to approximate non-linear operators. However, for the purpose of theoretical analysis, we focus on linear operators and delve into the approximation in the general setting of \Frechet{} error metrics.
Namely, we consider operators, which can be represented in the Fourier domain in terms of a symbol and analyze the convergence of neural network approximation classes in the topology of the space of symbols.
The initial starting point for the symbols is the H\"{o}rmander class $\mathcal{S}_{\rho,\delta}^m$ of symbols for pseudodifferential operators.
\begin{definition}[{H\"{o}rmander Symbols \cite{Hormander98AnalysisLinearPartial}}]
For some open $\set{U}\subset{\rr{M}}$, $m\in\rr{}$, $0<\rho\leq1$, and $0\leq \delta<1$, the class $\mathcal{S}_{\rho,\delta}^m(\set{U}\times\rr{N})$ is the set of all $a\in C^\infty(\set{U}\times\rr{N})$ such that
\begin{align}
\label{eq:hoermander_condition}
    \abs{\partial_x^\beta\partial_\xi^\alpha a(x,\xi)}
    \leq C_{\alpha,\beta}(1+\abs{\xi})^{m-\rho\abs{\alpha}+\delta\abs{\beta}}
\end{align}
for some constant $C_{\alpha,\beta}$ and all $(x,\xi)\in\set{U}\times\rr{N}$ and all multiindices $\alpha\in\nn{M}$ and $\beta\in\nn{N}$.
\end{definition}
According to \cite{Hormander07AnalysisLinearPartial}, this class is a \Frechet{} space with semi-norms given by the smallest possible constants that can be used in \cref{eq:hoermander_condition}.
In other words, the semi-norms are
\begin{align*}
    p_{\alpha,\beta}(a)=\sup_{(x,\xi)\in\set{U}\times\rr{N}}\abs{(1+\abs{\xi})^{-m+\rho\abs{\alpha}-\delta\abs{\beta}}\partial_x^\beta\partial_\xi^\alpha a(x,\xi)}.
\end{align*}

By making this transition to approximate symbols in the Fourier domain instead of operators in the original domain, we can reduce the infinite-dimensional problem to a finite-dimensional function approximation problem.
In order to measure the approximation error, we consider a sequence of separating semi-norms $\{p_i\}_{i=1}^\infty$ on a vector space $\set{V}$ and use the \Frechet{} metric 
\begin{align}
\label{eq:frechet_metric}
    d_\set{V}(x-y):=\sum_{i\in\zzp{}}\frac{1}{2^i}\frac{p_i(x-y)}{1+p_i(x-y)},
\end{align}
which, according to \cite[Remark 1.38(c)]{Rudin91FunctionalAnalysis}, induces the same topology as the sequence of semi-norms.
Note that, by employing the \Frechet{} framework, this approach is more general than any Banach space framework. To maximize expressivity, it is beneficial to consider the widest possible class of function spaces, which enhances the versatility and applicability of the approximation results. Hence, the use of the \Frechet{} metric as an error measure enables the analysis and approximation in broader settings. This makes the approach particularly well-suited for universal approximation arguments, especially  when aiming for maximum expressivity and applicability beyond the limitations of Banach spaces.

In a similar fashion \cite{Benth23NeuralNetworksFrechet} addressed approximation of functions from \Frechet{} spaces, equipped with this type of metric, to a Banach space.
For doing so, they extend several properties and results for neural networks to the infinite-dimensional setting.
These extensions include the notion of sigmoidal activation functions, discriminatory activation functions, and a universal approximation theorem. In the same year, Korolev \cite{Korolev22TwoLayerNeuralNetworks} established both inverse and direct approximation theorems with Monte Carlo rates for an extended class of Barron spaces of vector-valued functions (also referred to as vector-valued $\mathcal{F}_1$ functions) when using shallow neural networks. These results were derived using the \Frechet{} metric as the error measure.
Similarly, the purpose of the present paper is to provide theoretical statements about asymptotic approximation rates. However, we adopt different techniques and focus on operators with symbols, which reduce the complexity to function approximation, thereby streamlining the analysis.

The theory of approximating functions by neural networks has already been studied for several decades \cite{Hornik91ApproximationCapabilitiesMultilayer,Cybenko89ApproximationSuperpositionsSigmoidal,Mhaskar92ApproximationSuperpositionSigmoidal} and is still a very active field. 
Once a universal approximation theorem is established, a crucial next step is to derive asymptotic bounds on the approximation error, offering deeper insights into the accuracy of the approximation. 
The study of approximation rates for functions is already a well-developed field \cite{Barron93UniversalApproximationBounds, Leshno93MultilayerFeedforwardNetworks, Makovoz98UniformApproximationNeural}. However, it remains an active area of research, particularly for data-driven surrogate models like neural networks, where questions about novel settings, optimality, and the practical realization of these rates remain open.

Recent advancements in the theory and applications of neural networks have led to scenarios where the goal of machine learning is to approximate functions whose regularity is known beforehand \cite{Cybenko89ApproximationSuperpositionsSigmoidal, Yarotsky18OptimalApproximationContinuous, Abdeljawad22Approximationsdeepneural,Abdeljawad22UniformApproximationQuadratic,Montanelli21DeepReLUNetworks,Yang24Nearoptimaldeepneural,Siegel23Optimalapproximationrates,Han18Solvinghighdimensionalpartiala,Bolcskei19OptimalApproximationSparsely,Lu21DeepNetworkApproximation}.
A very fruitful field in this regard is the study of shallow neural networks which serve as foundational building blocks, offering insights into key properties such as universality, approximation rates, and expressivity \cite{Barron93UniversalApproximationBounds, Siegel20ApproximationRatesNeural, Ma22UniformApproximationRates, Li23TwolayerNetworkstext,Lu22PrioriGeneralizationError,Luo20TwoLayerNeuralNetworks,E21KolmogorovWidthDecay,Savarese19HowInfiniteWidth, Abdeljawad22IntegralRepresentationsShallow}. Their simplicity makes them ideal for theoretical analysis and generalization, providing a baseline for understanding more complex behaviors deep architectures.
A prominent concept emerging from this line of research is the study of spectral Barron spaces, which encapsulate functions that can be efficiently approximated by two-layers neural networks under spectral constraints. These spaces provide a mathematically rigorous framework for analyzing the expressivity of networks and their capacity to approximate target functions with controlled regularity and complexity \cite{Barron93UniversalApproximationBounds, Abdeljawad23SpaceTimeApproximationShallow, Siegel20ApproximationRatesNeural, Siegel22HighOrderApproximationRates}.
The initial results of \citeauthor{Barron93UniversalApproximationBounds, Hornik91ApproximationCapabilitiesMultilayer, Makovoz98UniformApproximationNeural} were approximation rates in terms of the $L^2$ and $L^\infty$ norms.
Recently, these results have been extended to the Hilbert-Sobolev Norm $H^m$ for $m\in\nn{}$, the rate has been refined from $N^{-1/2}$ to rates in the order of $N^{-\frac{1}{2}-\frac{s-m}{d}}$ \cite[Theorem 1]{Siegel22HighOrderApproximationRates} for target functions with finite regularity $s$, $m$ is the derivative-degree of the Sobolev-error, and $d$ is the input dimension of the target function.
The case with infinite regularity of the target function was considered in \cite[Theorem 2]{Siegel22HighOrderApproximationRates}, where it was shown that this leads to a (sub-)exponential decay of the error.

In this work, we generalize the approximation setting from normed spaces and metric spaces to \Frechet{} spaces, which is to the best of our knowledge the first such extension.
We provide two high-level results which connect approximation rates for the individual semi-norms to a sufficient width of shallow networks such that a certain error measured in a \Frechet{} metric $d_\set{V}$ is achieved.
That is, we provide the inverse of the approximation rate in closed form.

In the first result we assume that the sequence of semi-norms is monotonically growing when measuring any fixed target function.
This condition is rather mild; if the sequence $\{p_i\}_{i\in\nn{}}$ does not grow, then $\{q_i\}_{i\in\nn{}}$ with $q_i:=\max_{j\leq i} p_j$ does.
However, the result requires full knowledge of the asymptotic approximation rates in every semi-norm.
We will show that these assumptions can be fulfilled, when considering a sequence of Sobolev norms of increasing order together with the exponential spectral Barron space as class of target functions and cosine activated shallow neural networks as approximation class.
We furthermore show that this type of Barron space is embedded in a class of Fourier multipliers, which means that certain sub-classes of the H\"{o}rmander symbols are feasible for this type of approximation.
We furthermore show that our result can also be applied to the Gelfand-Shilov class of test functions.

In the second result we impose an upper bound on the growth of the semi-norms, which is a strong condition, but for this result we only require knowledge of the asymptotic approximation rate for the first semi-norm.
As an example we will again consider the sequence of Sobolev norms of increasing order.
However, this time we will show that this sequence does not fulfill the bounded-growth-condition when considering the exponential spectral Barron space as target class.
Instead we explore different classes of functions such as self-weighted Gevrey symbols and a class of Paley-Wiener functions with a mild smoothness constraint in the Fourier domain.
For the latter, we then show that there is an approximation class for which the asymptotic approximation rate in the $L^2(\rr{d})$-norm is the Monte-Carlo rate of order $1/2$.
That is, the approximation is possible without curse of dimensionality, even though the approximation is over the full domain.
Furthermore, this immediately leads to an approximation result in the \Frechet{} metric.

With our work, we lay the ground for approximation of pseudodifferential operators in their natural \Frechet{} metric.
Note that in the examples that are based on the exponential spectral Barron space and the Barron-Bandlimited functions we consider symbols which are independent of the first variable, that is, we can choose $\rho=1$ and by using the sequences of $H^\ell(\set{U})$ Sobolev norms we allow every $\delta\in[0,1)$.
Note however, that in the first result we have to choose $\set{U}$ as a bounded set which allows us to embed the class of target functions in a symbol space with bounded domain.
The Sobolev norm $H^\ell(\set{U})$ can then be seen as an $L^2$ variant and combination of all $p_{\alpha,\beta}$ with $\abs{\beta}\leq \ell$ over a bounded domain.
In the second result we can indeed allow an unbounded domain, however, the necessary $L^1$ constraint in order to impose the bandlimitedness excludes symbols of order $m>0$.
We leave the extension to the full class of H\"{o}rmander symbols open for future work.

The remainder of the paper is structured as follows.
As a first step, in \cref{sec:prelim}, we cover some preliminaries about function spaces and approximation results.
In \cref{sec:monotonic_growth} we present the approximation result with the monotonic-growth-condition and the concrete examples for which this leads to an approximation rate in the \Frechet{} metric of Sobolev norms.
Finally, in \cref{sec:bounded_growth}, we present the approximation result with the bounded-growth-condition and study its application to Barron-Bandlimited functions.

\section{Preliminaries}\label{sec:prelim}

In this work, for $L^1$ integrable functions, we use the following convention for the Fourier transform and its inverse:
$$
(\mathscr{F} u)(\xi)=\hat{u}(\xi)=\frac{1}{(2 \pi)^{d / 2}} \int_{\mathbb{R}^d} u(x) e^{-i x\cdot \xi} d x,\quad 
\left(\mathscr{F}^{-1} u\right)(x)=\frac{1}{(2 \pi)^{d / 2}} \int_{\mathbb{R}^d} u(\xi) e^{ix\cdot \xi} d \xi ,
$$
where $x\cdot \xi$ denotes the standard scalar product in $\rr{d}$.
We also recall that a \Frechet{} space $\mathbf{F}$ is a complete metrizable locally convex space.
If $\left\{p_i\right\}_{i \in \mathbb{N}}$ is a sequence
of continuous semi-norms on $\mathbf{F}$, then a metric can be defined as follows
$$
d(f, g)=\sum _{\ell \in \mathbb{N}} \frac{1}{2^\ell} \frac{p_\ell(f-g)}{1+p_\ell(f-g)},
$$
for each $f, g \in \mathbf{F}$.

\subsection{Function Spaces}

The core focus of our work lies in the use of spectral Barron spaces with exponential weights as target function classes.
Here we begin by introducing the concept of weighted Lebesgue, Sobolev, Fourier-Lebesgue and Gelfand-Shilov spaces.

\begin{definition}[Weighted Function Spaces]
    Let $p\in[1,\infty]$, $d\in\nn{}$, $m\in\zzp{}$, $\set{U}\subseteq\rr{d}$, and $\omega$ be a weight function on $\rr{d}$.
    We define the weighted Lebesgue space, the weighted Sobolev space, and the weighted Fourier-Lebesgue space as
    \begin{align*}
        L^p(\omega;\set{U})
        &:=
        \{f:\set{U}\to\rr{}\;\text{s.t.}\;\omega f\in L^p(\set{U})\},\\
        \sobolev{m,p}[\omega][\set{U}]
        &:=
        \{f:\set{U}\to\rr{}\;\text{s.t.}\;\forall \alpha\in\zzp{d}\,\text{with}\,\abs{\alpha}_1\leq m\,\text{it holds}\,\partial^\alpha f\in L^p(\omega;\set{U})\},\\
        \intertext{and}
        \FL{p}{\omega;\set{U}}
        &:=
        \{f:\set{U}\to\rr{}\;\text{s.t.}\;\exists f_e\in L^1(\rr{d})\,\text{with}\,f_e|_\set{U}=f\,\text{such that}\,\hat{f}_e\in L^p(\omega;\set{U})\},
    \end{align*}
    respectively.
    These spaces are normed spaces equipped with the norms
    \begin{align*}
        \norm{f}_{L^p(\omega;\set{U})}&:=\left(\int_\set{U}\abs{\omega(x)f(x)}^p\dee x\right)^{\frac{1}{p}},\\
        \norm{f}_{\sobolev{m,p}[\omega][\set{U}]}&:=\left(\sum\nolimits_{\abs{\alpha}\leq m}\norm{\omega\partial^\alpha f}{L^p(\set{U})}^p\right)^{\frac{1}{p}},\\
        \intertext{and}
        \norm{f}_{ \FL{p}{\omega;\set{U}}}&:=\inf_{\substack{f_e\in L^1\\\left.f_e\right|_\set{U}=f}}
        \norm{\omega\hat{f}_e}{L^{p}(\rr{d})},
    \end{align*}
    respectively.
    For $p=\infty$, we make the obvious modifications for $L^p(\rr{d})$ and replace the summation in the weighted Sobolev norm by a maximum.
\end{definition}
We omit the domain or the weight from the notation if $\set{U}= \rr{d}$or $\omega$ is a constant, respectively.
Note that if $\set{U}=\rr{d}$, the infimum in the Fourier-Lebesgue norm is over a single element (of equivalence classes), which therefore results in the implicitly requirement that $f\in L^1(\rr{d})$.

The original result of Barron space  $\mathcal{B}^1(\rr{d})$ was introduced in  \cite{Barron93UniversalApproximationBounds} consists of functions defined on $\rr{d}$ such that $\xi\cdot\hat{f}(\xi)$ is integrable. The fact that functions within these class can be approximated by shallow neural networks  without suffering from the curse of dimensionality has sparked significant research and numerous generalizations over the past decades.
For instance, in \cite{E22BarronSpaceFlowInduced} Barron spaces $\mathcal{B}_p$ are defined as the set of continuous functions that can be represented by 
\begin{equation}\label{eq:E_Barron_Def}
f(x)=\int_{\rr{}\times\rr{d}\times\rr{}} a \sigma\left(\boldsymbol{b}^T x+c\right) \rho(d a, d \boldsymbol{b}, d c), 
\end{equation}
with finite Barron norm:
$$
\|f\|_{\mathcal{B}_p}=\inf _\rho\left(\mathbb{E}_\rho\left[|a|^p\left(\|\boldsymbol{b}\|_1+|c|\right)^p\right]\right)^{1 / p},
$$
where the infimum is taken over all $\rho$ for which holds \eqref{eq:E_Barron_Def} for all functions $f$. This space can be viewed as a generalization of the shallow neural network representation of a function, extending it to an integral representation with respect to an underlying probability measure on the parameter space.
Furthermore, in \cite{Siegel22SharpBoundsApproximation, Siegel20ApproximationRatesNeural}, a spectral Barron space is defined as a Fourier-Lebesgue space with polynomial weights. That is, for $\Omega\subseteq \rr{d}$
$$
\|f\|_{\mathscr{B}_s(\Omega)}:=\inf _{f_e \mid_{\Omega}=f} \int_{\mathbb{R}^d}|\hat{f}(\xi)|(1+|\xi|)^s \mathrm{~d} \xi
$$
and the corresponding spectral Barron space is defined as 
$$
\mathscr{B}_s(\Omega) := \left\{f:\Omega\to \rr{}\text{ s.t. } \|f\|_{\mathscr{B}_s(\Omega)}<\infty\right\}.
$$
Note that, this kind of spaces has been already proposed by H\"ormander \cite{Hormander64LinearPartialDifferential}, where they are defined for $1 \leq p \leq \infty$ and  $s \geq 0$ as follows:
\[
\mathscr{F} L_p^s\left(\mathbb{R}^d\right):=\left\{f \in \mathscr{S}^{\prime}(\rr{d}) \text{ s.t. }  \left(1+|\xi|^s\right) \widehat{f}(\xi) \in L^p(\rr{d})\right\}.
\]
These spaces have been extensively studied by the microlocal analysis and pseudodifferential operator communities. A similar version has been also investigated in our previous papers \cite{Abdeljawad23SpaceTimeApproximationShallow, Abdeljawad24WeightedApproximationBarron}. Instead in \cite{Siegel22HighOrderApproximationRates} authors investigated the first Barron space with exponential  weights. These spaces will be also used in the current work.
For the exponential spectral Barron space we adopt the notation from \cite{Siegel22HighOrderApproximationRates}.
\begin{definition}[Exponential Spectral Barron Space]
    Let $\set{U}\subset\rr{d}$ be a bounded domain and let $\omega(\xi)=e^{c\abs{\xi}^\beta}$ with $\beta,c>0$.
    Then the exponential spectral Barron space over $\set{U}$ is defined as
    \begin{align*}
        \barron{\beta,c}[\set{U}]
        :=\left\{f:\set{U}\to\rr{}:\norm{f}{\barron{\beta,c}[\set{U}]}<\infty\right\},
    \end{align*}
    with the corresponding norm
    \begin{align*}
        \norm{f}[\barron{\beta,c}[\set{U}]]
        :=\inf_{\substack{f_e\in L^1\\\left.f_e\right|_\set{U}=f}}
        \norm{\omega \F\{f_e\}}[L^1(\rr{d})].
    \end{align*}
\end{definition}

\subsection{Multipliers and Gelfand-Shilov Spaces} 

The concept of a multiplier often arises in various contexts, such as in the study of operators, harmonic analysis, and signal processing.

\begin{definition}[Smooth Multipliers]\label{def:symbol}
Let $d\in \nn{}$,
$U\subseteq \rr{d}$,
$p\in [1,\infty]$
and
$\omega$ is a weight function defined on 
$\rr {d}$,
then the class of symbols
$\mathbf{S}^{p}_\omega( U)$
(where  $\omega$ characterizes the growth order
and  $ p$  indicates the order of integrability)
consists of all symbols
$a \in C^{\infty}\left(U\right)$
such that
\begin{equation}\label{eq:Lebesgue_symbol_norm}
\left(\int_{ U}
\left|\omega(x)\partial_x^\alpha a(x)\right|
^{p} dx\right)^{1/p} <\infty
\text{ for any }
 \alpha, \beta\in \zzp{d}.
\end{equation}
For $a\in {\mathbf{S}^{p}_\omega(U)}$,
we define the semi-norms
$\| a \|_{\mathbf{S}^{p}_{\omega, \ell}( U)}$ by
\begin{equation}\label{eq:symbol_norm}
\|a\|_{\mathbf{S}^{p}_{\omega, \ell}( U)}
:=
\max_{ |\alpha| \leq \ell }\;
\left(\int_{ U}
\left|\omega(x) \partial_x^\alpha a(x)\right|^p
dx\right)^{1/p},
\end{equation}
for any $\ell \in \zzp{}$,
if $p=\infty$, then we write ${\mathbf{S}_\omega(U)}$
instead of ${\mathbf{S}^{\infty}_\omega(U)}$
and we define
$\|a\|_{\mathbf{S}_{\omega,\ell}(U)}$
as follows
\begin{equation}\label{eq:sup_sup_symb_norm}
\|a\|_{\mathbf{S}_{\omega, \ell}( U)}
=
\max_{|\alpha|\leq \ell}
\;\sup_{x\in U}\;
\left|\omega(x)\partial_x^\alpha  a(x)\right|,
\end{equation}
for any $\ell\in \zzp{}$.
\end{definition}
The quantities \cref{eq:symbol_norm} define a Fr\'echet topology of 
${\mathbf{S}^{p}_\omega(U)}$.
It is obvious that our symbol class corresponds to the so-called
\emph{Gelfand-Shilov class} $S_s(\rr{d})$ if
$\omega(x) = e^{r|x|^{1/s}}$ for some $r >0$
in \cref{eq:sup_sup_symb_norm} for all $\ell\in \zzp{}$
cf.,
\cite{Chung96CharacterizationsGelfandShilovSpaces,
Gelfand16SpacesFundamentalGeneralized,
VanEijndhoven87FunctionalAnalyticCharacterizations}. 
One can also remark that the single block variant of Bochner-Sobolev spaces
defined in 
cf., \cite{Abdeljawad23SpaceTimeApproximationShallow}
is the limited version of the space ${\mathbf{S}^{p}_\omega(U)}$ 
of functions satisfying \cref{eq:symbol_norm}.
In the sense that, in \cref{eq:symbol_norm}
we allow the order of derivatives $\alpha$ to be unbounded.

Recently,  \emph{Gelfand-Shilov spaces} and their generalizations have been widely studied and used in many applications to partial differential equations, see  \cite{Abdeljawad20LiftingsUltramodulationSpaces,AriasJunior22CauchyProblem3evolution, Ascanelli19SchrodingertypeEquationsGelfandShilov,Carypis17PropagationExponentialPhase}.
Here we recall the definition of Gelfand-Shilov spaces and give some characterizations of these spaces, more details can be found in  \cite{Abdeljawad19PseudoDifferentialCalculusAnisotropic, Abdeljawad20LiftingsUltramodulationSpaces}.

\begin{definition}[Gelfand-Shilov Spaces]
Let $s,\sigma>0$ then the Gelfand-Shilov space $S_s^\sigma(\rr{d})$ consists of all smooth functions $u\in C^\infty(\rr{d})$ for which there is an $h>0$ such that
\begin{equation}\label{gfseminorm}
\norm{u}[{S _{s;h}^{\sigma}}]
\equiv
\sup_{\alpha\in \zzp{d} ,\beta \in\zzp{d}}\;
\sup_{x\in \rr {d}}
\frac {|x^{\alpha}\partial_x ^{\beta}
u(x)|}{h^{|\alpha+\beta|}
\alpha !^{\frac{1}{s}}\,
\beta !^{\frac{1}{\sigma}}}<\infty,
\end{equation}
\end{definition}

Obviously $S _{s;h}^{\sigma}(\rr{d})$ is a Banach space which increases as
$h, s $ and $\sigma$ increase,
and is contained in the Schwartz space $\mathscr{S}(\rr{d})$. 

The Gelfand-Shilov spaces $S_s^\sigma\left(\rr{d}\right)$ and $\Sigma_s^\sigma\left(\rr{d}\right)$ are the inductive and projective limits, respectively, of $S_{s, h}^\sigma\left(\rr{d}\right)$ with respect to $h$. This implies that
\begin{equation}\label{GSspacecond1}
S_s^\sigma\left(\rr{d}\right)=\bigcup_{h>0} S_{s, h}^\sigma\left(\rr{d}\right) \quad \text { and } \quad \Sigma_s^\sigma\left(\rr{d}\right)=\bigcap_{h>0} S_{s, h}^\sigma\left(\rr{d}\right),
\end{equation}
and that the topology for $S_s^\sigma\left(\rr{d}\right)$ is the strongest possible one such that each inclusion map from $S_{s, h}^\sigma\left(\rr{d}\right)$ to $S_s^\sigma\left(\rr{d}\right)$ is continuous, for every choice 
of $h>0$.

Next proposition shows different characterizations of the Gelfand-Shilov spaces, see 
\cite{Chung96CharacterizationsGelfandShilovSpaces,
VanEijndhoven87FunctionalAnalyticCharacterizations,
Gelfand16SpacesFundamentalGeneralized} for more details.

\begin{proposition}\label{prop:S_def_equiv_fourier}
Taking $s,\sigma > 0$, the following
conditions are equivalent.
\begin{propenum}
\item\label{it:GS:I} $u\in {S} _{s }^{\sigma }(\rr {d})$\quad
($u\in \Sigma _{s }^{\sigma}(\rr {d})$);

\item\label{it:GS:II} for some (for every) $r>0$  it holds
\begin{equation*}
\displaystyle{|u(x)|\lesssim e^{-r|x|^{s} }}
\quad \text{and}\quad 
\displaystyle{|\hat u(\xi )|\lesssim
e^{-r|\xi |^{\sigma} }},\quad x,\xi\in \rr{d}
\end{equation*}
\item\label{it:GS:III} for some (every) $h>0$ it holds

$$
|x^\alpha f(x)| \lesssim h^{|\alpha|} \alpha!^s \quad \text { and } \quad |\xi^\beta \widehat{f}(\xi)| \lesssim h^{|\beta|} \beta!^\sigma, \quad x,\xi\in \rr{d}, \alpha, \beta \in \mathbf{N}^d
$$

\item\label{it:GS:IV} for some (every) $h>0$ it holds

$$
|x^\alpha f(x)| \lesssim h^{|\alpha|} \alpha!^s \quad \text { and } \quad|\partial^\beta f(x)| \lesssim h^{|\beta|} \beta!^\sigma, \quad x\in \rr{d}, \alpha, \beta \in \mathbf{N}^d.
$$
\end{propenum}
\end{proposition}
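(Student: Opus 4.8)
The plan is to prove the four statements equivalent through a cycle of implications, relying on two mechanisms. The first is the Fourier transform, which under the convention of \cref{sec:prelim} satisfies $\widehat{\partial^\beta u}(\xi) = (i\xi)^\beta\hat u(\xi)$ and $\widehat{x^\alpha u}(\xi) = i^{|\alpha|}\partial^\alpha\hat u(\xi)$; this exchanges multiplication by monomials with differentiation and lets us pass between the physical-side and frequency-side estimates, so it is what will connect (3) and (4), as well as the two halves of (2). The second mechanism is an elementary scalar estimate relating exponential weights to factorial-weighted monomial bounds, which is what connects (2) and (3). I would first establish this scalar lemma, then prove (2)$\Leftrightarrow$(3), then (3)$\Leftrightarrow$(4), and finally tie in the defining condition (1).

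The scalar lemma I would isolate is the following: for fixed $a,\theta>0$ the map $t\mapsto t^k e^{-a t^\theta}$ on $[0,\infty)$ attains its maximum at $t^\theta = k/(a\theta)$, and Stirling's formula turns the maximal value into a bound of the form $C\,h^k (k!)^{1/\theta}$; conversely, if $t^k g(t)\le C h^k (k!)^{1/\theta}$ holds for all $k$, then minimizing the right-hand side over $k$ (choosing $k\approx a t^\theta$) yields $g(t)\lesssim e^{-a' t^{\theta}}$ for a suitable $a'>0$. Applying this coordinatewise in $\rr{d}$ — together with the comparison $\alpha! \le |\alpha|! \le d^{|\alpha|}\alpha!$ to pass between one-dimensional and multi-index factorials — shows that the decay of $u$ in (2) is equivalent to the first estimate of (3), and, on the frequency side, that the decay of $\hat u$ is equivalent to the second estimate $|\xi^\beta\hat u|\lesssim h^{|\beta|}\beta!^{\sigma}$; the reciprocal relation between the exponent in the exponential weight and the exponent on the factorial is exactly the output of this optimization. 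This gives (2)$\Leftrightarrow$(3).

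For (3)$\Leftrightarrow$(4) I would use the Fourier identities above. Both conditions share the first (physical-side) estimate, so only the second estimate must be converted. Given (3), the bound $|\partial^\beta u| = |\mathscr{F}^{-1}((i\xi)^\beta\hat u)|\le (2\pi)^{-d/2}\|\xi^\beta\hat u\|_{L^1}$ reduces matters to an $L^1$ bound on $\xi^\beta\hat u$; the main technical point is that (3) only supplies an $L^\infty$ bound, so I would buy integrability by spending finitely many extra powers, writing $\int|\xi^\beta\hat u|\,d\xi = \int \langle\xi\rangle^{-(d+1)}\langle\xi\rangle^{d+1}|\xi^\beta\hat u|\,d\xi$ and estimating $\langle\xi\rangle^{d+1}|\xi^\beta\hat u|$ by the bounds on $|\xi^{\beta+\gamma}\hat u|$ for $|\gamma|\le d+1$. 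The submultiplicativity $(\beta+\gamma)!^{\sigma}\le 2^{\sigma|\beta+\gamma|}\beta!^{\sigma}\gamma!^{\sigma}$ keeps the result in the form $h'^{|\beta|}\beta!^{\sigma}$ after absorbing the finite sum over $\gamma$ into the constant, and $\int\langle\xi\rangle^{-(d+1)}\,d\xi$ converges. The reverse implication (4)$\Rightarrow$(3) is symmetric, bounding $|\xi^\beta\hat u| = |\widehat{\partial^\beta u}|\le (2\pi)^{-d/2}\|\partial^\beta u\|_{L^1}$ and using the decay of $u$ (the shared first estimate) to supply the integrability of the derivatives.

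Finally, (1) is by definition the joint seminorm bound, from which (4) follows at once by setting $\beta=0$ and then $\alpha=0$. The converse (4)$\Rightarrow$(1) — reconstructing the joint estimate on $x^\alpha\partial^\beta u$ from the two separate estimates — is the step I expect to be the main obstacle, since neither estimate in (4) controls a mixed monomial--derivative directly. I would route it through (2): having already shown (4)$\Leftrightarrow$(2)$\Leftrightarrow$(3), the two-sided exponential decay in (2) combined with the frequency-side monomial bounds controls $x^\alpha\partial^\beta u = \mathscr{F}^{-1}(i^{|\alpha|}\partial^\alpha((i\xi)^\beta\hat u))$ after expanding $\partial^\alpha((i\xi)^\beta\hat u)$ by Leibniz and again trading a fixed number of powers for $L^1$ integrability; Stirling then recombines the two factorial factors into $h^{|\alpha+\beta|}\alpha!^{s}\beta!^{\sigma}$. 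Throughout, the ``for some $h$'' ($S_s^\sigma$, inductive limit) and ``for every $h$'' ($\Sigma_s^\sigma$, projective limit) versions run in parallel: every estimate above is uniform in the relevant constants, so one only has to track the quantifier on $h$ (equivalently on $r$) through each implication, the optimization in the scalar lemma being the one place where the constant degrades by a fixed factor.
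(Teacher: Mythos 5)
The paper itself gives no proof of this proposition --- it is quoted as a known characterization, with references to Chung--Chung--Kim, van Eijndhoven, and Gelfand--Shilov --- so your attempt stands on its own, and it contains a genuine gap. The parts that work: the scalar optimization lemma, the coordinatewise passage between exponential decay and factorial-weighted monomial bounds, the easy implications out of (1), and the direction (3)$\Rightarrow$(4), where spending the $\langle\xi\rangle^{d+1}$ factor costs only further \emph{monomial} bounds on $\hat u$, which (3) supplies. The gap is in every direction that must rebuild a \emph{joint} estimate from the two \emph{separate} families of estimates. In (4)$\Rightarrow$(3) you need $\|\partial^\beta u\|_{L^1}$, but (4) gives only $\|\partial^\beta u\|_{L^\infty}$ together with decay of $u$ itself; decay of $u$ does not pass to its derivatives without mixed bounds of exactly the type you are trying to avoid (interval Landau--Kolmogorov inequalities do transfer some decay to $f'$ from decay of $f$ and boundedness of $f''$, but naive iteration degrades the exponent with the order of the derivative and cannot yield the uniform-in-$\beta$ factorial bounds). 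The same circularity sits at the core of your route for (4)$\Rightarrow$(1): after writing $x^\alpha\partial^\beta u=\mathscr{F}^{-1}\bigl(i^{|\alpha|}\partial^\alpha((i\xi)^\beta\hat u)\bigr)$ and expanding by Leibniz, the resulting terms are $\xi^{\beta-\gamma}\partial^{\alpha-\gamma}\hat u$, i.e.\ mixed monomial--derivative expressions in $\hat u$; but (2)/(3) control only pure monomials $\xi^\delta\hat u$ and, via the decay of $u$, pure derivatives $\partial^\delta\hat u$, never their products. So the Leibniz step presupposes, for $\hat u$, precisely the joint estimate the theorem asserts. This recombination --- that separate $S_s$-type and $S^\sigma$-type bounds imply membership in $S_s^\sigma$ --- is the actual content of the proposition (essentially Kashpirovskii's theorem that $S_s^\sigma=S_s\cap S^\sigma$), and it requires a tool your proposal does not contain; the cited references handle it with carefully balanced interval inequalities and related machinery, not with the Fourier/Leibniz loop you sketch.

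There is also an exponent inconsistency that you silently step over. Your scalar lemma correctly yields the reciprocal correspondence (decay exponent $\theta$ corresponds to factorial exponent $1/\theta$), yet you then claim it identifies the decay $e^{-r|x|^{s}}$ in (2) with the bound $\alpha!^{s}$ in (3). Applied consistently, your lemma links (2) to monomial bounds with $\alpha!^{1/s}$ --- which is the convention of the paper's seminorm definition of $S_{s;h}^{\sigma}$ --- whereas (3) and (4) as printed carry the standard Gelfand--Shilov exponents $\alpha!^{s},\beta!^{\sigma}$. As stated, the proposition mixes the two conventions and is internally inconsistent for $s\neq 1$; a correct proof has to notice this and fix one convention (replace $s,\sigma$ by $1/s,1/\sigma$ in (3)--(4), or restate (2) with $e^{-r|x|^{1/s}}$ and $e^{-r|\xi|^{1/\sigma}}$), rather than asserting both.
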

Note that $S_s$ is characterized by the satisfaction of the left-hand side conditions in either \cref{it:GS:II,it:GS:III,it:GS:IV}.
Similarly, $S^\sigma$ is characterized by the satisfaction of the right-hand side conditions in any of \cref{it:GS:II,it:GS:III,it:GS:IV}.
It is worth mentioning that the Fourier transform extends uniquely to homeomorphisms from $\left(S_s^\sigma\right)^{\prime}\left(\rr{d}\right)$ to $\left(S_\sigma^s\right)^{\prime}\left(\rr{d}\right)$, and from $\left(\Sigma_s^\sigma\right)^{\prime}\left(\rr{d}\right)$ to $\left(\Sigma_\sigma^s\right)^{\prime}\left(\rr{d}\right)$. Furthermore, it restricts to homeomorphisms from $S_s^\sigma\left(\rr{d}\right)$ to $S_\sigma^s\left(\rr{d}\right)$, and from $\Sigma_s^\sigma\left(\rr{d}\right)$ to $\Sigma_\sigma^s\left(\rr{d}\right)$.

\subsection{Approximation with Shallow Networks}

Let $\mathbb{D}$ be a uniformly bounded dictionary in a Banach space $X$, that is, $\mathbb{D}$  is a subset of $X$ such that 
\[
\sup_{u\in\mathbb{D}} \|u\|_X<\infty.
\]
Nonlinear dictionary approximation focuses on approximating a function  using finite dictionary expansions, meaning approximations are drawn from a specific nonlinear set of the form:
\[
\Sigma_N(\mathbb{D}):= \left\{ \sum_{k=1}^N a_ku_k \text{ s.t. }u_k\in \mathbb{D}\right\}.
\]
It is often crucial to impose control over the coefficients that appear in the expansion, that is,
\[
\Sigma_{N,M}^p(\mathbb{D}):= \left\{ \sum_{k=1}^N a_ku_k \text{ s.t. }u_k\in \mathbb{D} \text{ and } \|a\|_{\ell^p}\leq M\right\},
\]
note that if $p=1$ we simply denote $\Sigma_{N,M}(\mathbb{D})$ instead of $\Sigma_{N,M}^1(\mathbb{D})$.

A special case that we will consider is the class of shallow networks with (complex) cosine activation function as in \cite[eq. (16)]{Siegel22HighOrderApproximationRates}
\begin{align}\label{eq:ExpSpecBarronSpace}
    \Sigma_{N,M}
    =
    \left\{
        \sum_{n=1}^Na_ne^{i2\pi \theta_n x}:\theta_n\in\rr{}, a_n\in\cc{},\sum_{n=1}^N\abs{a_n}\leq M
    \right\}.
\end{align}
When $M=\infty$ we denote $\Sigma_{N,\infty}$ by $\Sigma_N$.
This class exhibits an exponential approximation rate when approximating targets from the exponentially weighted spectral Barron space.

\begin{proposition}{{Approximation of Exponential Spectral Barron Space \cite[Theorem 2]{Siegel22HighOrderApproximationRates}}}
    \label{prop:siegel_exp:explicit}
    Let $\set{U}=[0,1]^d$, $0<\beta<1$, and $c>0$.
    Then for any $m\geq 0$, there exists a $c^\prime>0$ such that for $f\in \barron{\beta,c}[\set{U}]$ and $M\lesssim \norm{f}[{\barron{\beta,c}[\set{U}]}]$ we have
    \begin{align*}
        \inf_{f_N\in\Sigma_{N,M}}
        \norm{f-f_N}[{H^{m}(\set{U})}]
        \lesssim
        \norm{f}[{\barron{\beta,c}[\set{U}]}]
        e^{-c^\prime N^\frac{\beta}{d}}.
    \end{align*}
\end{proposition}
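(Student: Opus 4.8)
The plan is to derive the sub-exponential rate from the already-known polynomial-order rate for finite-regularity spectral Barron spaces, by using that $\barron{\beta,c}[\set{U}]$ embeds continuously into \emph{every} polynomial spectral Barron space $\mathscr{B}_s(\set{U})$ and then optimizing over the regularity index $s$. The starting observation is that the exponential weight dominates every polynomial weight: for each $s\ge 0$ one has the pointwise bound $(1+|\xi|)^s\le K(s)\,e^{c|\xi|^\beta}$ with
\[
K(s):=\sup_{\xi\in\rr{d}}(1+|\xi|)^s e^{-c|\xi|^\beta}.
\]
Multiplying by $|\hat{f}_e|$, integrating, and taking the infimum over admissible extensions $f_e$ gives $\|f\|_{\mathscr{B}_s(\set{U})}\le K(s)\,\norm{f}[{\barron{\beta,c}[\set{U}]}]$. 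A short calculus computation (the maximizer sits near $|\xi|\sim(s/c\beta)^{1/\beta}$) yields the asymptotics $\log K(s)=\tfrac{s}{\beta}\log s\,(1+o(1))$, which is the only feature of $K$ the argument will use.

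Next I would feed this into the finite-regularity rate for cosine-activated shallow networks recalled in the introduction \cite[Theorem 1]{Siegel22HighOrderApproximationRates}, namely the $H^m$-rate $N^{-\frac12-\frac{s-m}{d}}$. For every $s\ge m$ this produces an approximant $f_N\in\Sigma_{N,M}$ with
\begin{align*}
\inf_{f_N\in\Sigma_{N,M}}\norm{f-f_N}[{H^{m}(\set{U})}]
&\lesssim \|f\|_{\mathscr{B}_s(\set{U})}\,N^{-\frac12-\frac{s-m}{d}}\\
&\lesssim K(s)\,\norm{f}[{\barron{\beta,c}[\set{U}]}]\,N^{-\frac12-\frac{s-m}{d}}.
\end{align*}
The crucial point is that the coefficient budget of this construction can be taken to be governed by the total spectral mass $\int_{\rr{d}}|\hat{f}_e|\dee\xi=\|f\|_{\mathscr{B}_0(\set{U})}\le\norm{f}[{\barron{\beta,c}[\set{U}]}]$, which is \emph{uniform in} $s$; this is exactly what allows the budget $M\lesssim\norm{f}[{\barron{\beta,c}[\set{U}]}]$ to be held fixed while $s$ is driven to infinity.

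It then remains to optimize the free parameter $s=s(N)$. Writing the resulting exponent as $\Psi(s)=\log K(s)-\tfrac{s-m}{d}\log N\approx\tfrac{s}{\beta}\log s-\tfrac{s}{d}\log N$ and solving $\Psi'(s)=0$ gives the balance point $s(N)\sim N^{\beta/d}$, at which $\Psi(s(N))\sim-\tfrac{1}{\beta e}N^{\beta/d}$. Substituting this choice back into the bound collapses the polynomial factor $N^{-\frac12-\frac{s-m}{d}}$ together with $K(s)$ into $e^{-c'N^{\beta/d}}$ for a suitable $c'>0$, yielding the claimed estimate $\norm{f}[{\barron{\beta,c}[\set{U}]}]\,e^{-c'N^{\beta/d}}$.

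I expect the main obstacle to be the bookkeeping of the implied constants: the constant hidden in the finite-regularity rate depends on both $s$ and $d$, and one must check that these factors---together with the $N^{-1/2}$ prefactor and the lower-order slack inside $\log K(s)$---are absorbed into a slightly smaller $c'$ rather than overwhelming the gain $e^{-\Theta(N^{\beta/d})}$. Two smaller points also need care: that $s(N)$ is an admissible (real, $\ge m$) regularity index so that the finite-regularity result applies, and that the restriction $0<\beta<1$ is genuinely used---it guarantees that the exponential weight dominates all polynomials with a non-degenerate stationary point in the optimization above, the borderline $\beta=1$ being precisely the Paley--Wiener/entire regime handled separately later in the paper.
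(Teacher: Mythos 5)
First, a point of reference: the paper never proves this proposition at all --- it is imported verbatim as \cite[Theorem 2]{Siegel22HighOrderApproximationRates} --- so your proposal must be judged as a proof of the cited theorem itself. As such, it has a genuine gap, and it sits exactly at the point you set aside as ``bookkeeping of the implied constants''. Your scheme multiplies three factors: the hidden constant $C(s)$ of the finite-regularity rate, the embedding constant $K(s)$ with $\log K(s)=\tfrac{s}{\beta}\log s\,(1+o(1))$, and $N^{-\frac12-\frac{s-m}{d}}$. At your balance point $s(N)\sim N^{\beta/d}$ the two dominant terms $\log K(s)$ and $\tfrac{s}{d}\log N$ are each of size $s\log s$ and cancel at leading order; the advertised gain $-\Theta(N^{\beta/d})$ is only the $O(s)$ remainder of that cancellation. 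Hence the argument survives only if $\log C(s)=O(s)$, i.e.\ if the hidden constant grows at most exponentially in $s$ --- which is precisely what ``absorbed into a slightly smaller $c'$'' means.

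That bound on $C(s)$ is provably impossible for any valid constant in the finite-regularity theorem. Take $f$ band-limited, with $\hat f$ a nonzero continuous function supported in the unit ball and $f\in L^1(\rr{d})$ (a Fej\'er-type kernel), so that $\norm{f}[{\mathscr{B}_s([0,1]^d)}]\le 2^s\norm{\hat f}[L^1]$. If the $H^m$-rate held with $C(s)\le e^{as}$ and with a coefficient budget $M\lesssim\norm{\hat f}[L^1]$ uniform in $s$ (which your argument requires), then for every fixed $N>(2e^a)^d$,
\begin{equation*}
    \inf_{f_N\in\Sigma_{N,M}}\norm{f-f_N}[{H^m([0,1]^d)}]
    \;\le\;
    \norm{\hat f}[L^1]\,N^{-\frac12+\frac{m}{d}}\left(2e^a N^{-1/d}\right)^{s}
    \xrightarrow[s\to\infty]{}0 .
\end{equation*}
But $\Sigma_{N,M}$ with fixed $N$ and finite $M$ is closed in $H^m([0,1]^d)$: coefficients are $\ell^1$-bounded, so along any convergent sequence each frequency either converges or escapes to infinity, and escaping terms contribute only a weak limit $0$, so the strong limit lies again in $\Sigma_{N,M}$. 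The display would therefore force $f|_{[0,1]^d}=\sum_{n\le N}a_ne^{2\pi i\theta_n\cdot x}$; both sides are restrictions of entire functions, so by analytic continuation the identity holds on all of $\rr{d}$, whence $\hat f$ is a finite sum of Dirac masses --- a contradiction. Thus $C(s)^{1/s}\to\infty$, and this is not pedantry: redoing your optimization with, say, $\log C(s)\ge\tfrac{\theta}{d}s\log s$ moves the optimal index to $s\sim N^{\beta/(d+\beta\theta)}$ and the output to $e^{-cN^{\beta/(d+\beta\theta)}}$, a strictly smaller exponent than $\beta/d$; even milder super-exponential growth costs at least a power of $\log N$ inside the exponent. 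So the finite-$s$ theorem used as a black box cannot deliver the stated rate; the exponential rate has to be produced directly in frequency space --- truncation at $\abs{\xi}\lesssim N^{1/d}$, where the weight yields $e^{-cN^{\beta/d}}$, plus reproduction of the remaining band-limited bulk on the bounded domain by lattice-type frequencies with exponentially small error --- which is the mechanism behind the cited theorem. A smaller final point: as written, your sketch never actually uses $0<\beta<1$ (the maximizer of $(1+t)^se^{-ct^\beta}$ is non-degenerate for every $\beta>0$), so the same reasoning would ``prove'' the statement in regimes where the cited theorem does not assert it; that is another sign the essential mechanism is missing.
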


\section{\Frechet{}-Approximation with Semi-Norms of Monotonic Growth}
\label{sec:monotonic_growth}

As motivated in \cref{sec:intro}, we aim to approximate functions in the multipliers class and measure convergence in the corresponding topology that is induced by a separable sequence of semi-norms.
To formulate this problem in a general way, we consider the space $\targetSpace{}$ of target functions to be a vector space on which $\{\seminorm{\cdot}[\ell]\}_{\ell=0}^\infty$ is a separable sequence of semi-norms.
By \cite[Remark 1.38 (c)]{Rudin91FunctionalAnalysis} it is known that the topology that is induced by the sequence of semi-norms, is indeed the same as the the topology that is induced by
\begin{align*}
    d_\targetSpace(f):=\sum_{k\in\zzp{}}\frac{1}{2^k}\frac{\seminorm{f}[k]}{1+\seminorm{f}[k]}.
\end{align*}
By imposing a certain growth condition on the sequence of semi-norms and assuming knowledge of approximation results for each separate semi-norm, we can show the following approximation result for $d_\targetSpace$.

\begin{theorem}[Approximation with Monotonic Semi-Norm-Growth]
\label{thm:approximation_Frechet_metric:upwards}
    Let   
    $\targetSpace{}$ be a vector space and
    $\{\seminorm{\cdot}[\ell]\}_{\ell=0}^\infty$
    be a separable sequence of semi-norms on $\targetSpace{}$.
    Assume that there is a sequence $\{M_{\ell}\}_{\ell=0}^\infty$
    such that
    for all $f\in\targetSpace{}$ we have
    \begin{align}
    \label{eq:frechet:MlBound:upwards}
        \seminorm{f}[k]
        \leq
        M_{\ell} \seminorm{f}[\ell]
        \qquad\text{for every}\,k,\ell \in \zzp{}\,\text{with}\,k\leq \ell.
    \end{align}
    Furthermore, for all $\nNeuron\in\nn{}$ let $\Sigma_\nNeuron\subset \targetSpace{}$ be the approximation class of order $\nNeuron$
    such that for all $f\in\targetSpace{}$ and all $\ell\in\nn{}$ there is 
    a decreasing and bijective function $r_\ell:[1,\infty)\to(0,\bar{r}_{\ell}]$ (the rate) for some $\bar{r}_{\ell}>0$
    such that
    \begin{align}
    \label{eq:assumption_rate:upwards}
        \inf_{f_\nNeuron\in\Sigma_\nNeuron}\seminorm{f-f_\nNeuron}[\ell]
        \leq 
        C_f
        r_\ell(\nNeuron)
    \end{align}
    for some constant $C_f>0$ that may depend on the function $f$.
    Then, for any $\varepsilon>0$
    it is sufficient to choose
    \begin{align*}
        N
        \geq
        N_{\ell_\varepsilon}
        :=
        \left\lceil 
            r_{\ell_\varepsilon}^{-1}
            \left(
                \min\left\{r_{\ell_\varepsilon}(1),\frac{1}{2^{\ell_\varepsilon} C_fM_{\ell_\varepsilon}}\right\}
            \right)
        \right\rceil
        \qquad\text{with}\qquad
        \ell_\varepsilon:=\lceil-\log_2(\varepsilon)\rceil+1
    \end{align*}
    in order to achieve
    \begin{align*}
        \inf_{f_\nNeuron\in\Sigma_\nNeuron} d_{\targetSpace{}}(f-f_\nNeuron)
        =
        \inf_{f_\nNeuron\in\Sigma_\nNeuron}
            \sum_{\ell=0}^\infty\frac{1}{2^\ell}
            \frac{\seminorm{f-f_\nNeuron}[\ell]}
                {1+\seminorm{f-f_\nNeuron}[\ell]}
        < \varepsilon.
    \end{align*}
\end{theorem}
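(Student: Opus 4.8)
The plan is to control the Fr\'echet sum by cutting it at the index $\ell_\varepsilon$ into a finite \emph{head} $\sum_{\ell=0}^{\ell_\varepsilon}$ and an infinite \emph{tail} $\sum_{\ell=\ell_\varepsilon+1}^{\infty}$, to bound each block separately, and then to choose the width $N$ just large enough that the head becomes small. The only analytic facts I would use about the summands are that $t\mapsto t/(1+t)$ is nondecreasing on $[0,\infty)$ and that $t/(1+t)\le\min\{t,1\}$; everything else is the hypotheses \cref{eq:frechet:MlBound:upwards} and \cref{eq:assumption_rate:upwards} together with the invertibility of the rate.

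For the tail I would simply discard the semi-norms: since each quotient is strictly below $1$, the tail is dominated by the geometric remainder
\[
    \sum_{\ell=\ell_\varepsilon+1}^{\infty}\frac{1}{2^\ell}\frac{p_\ell(f-f_N)}{1+p_\ell(f-f_N)}
    <\sum_{\ell=\ell_\varepsilon+1}^{\infty}\frac{1}{2^\ell}
    =\frac{1}{2^{\ell_\varepsilon}}.
\]
This is exactly where the choice $\ell_\varepsilon=\lceil-\log_2(\varepsilon)\rceil+1$ is used: it forces $2^{-\ell_\varepsilon}\le\varepsilon/2$ uniformly in $N$ and in $f$. Note that this block carries no information about the approximation class --- it is purely the tail of a convergent series.

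For the head I would bound $t/(1+t)\le t$ and then invoke the monotonic-growth hypothesis \cref{eq:frechet:MlBound:upwards}, which for every $\ell\le\ell_\varepsilon$ gives $p_\ell(f-f_N)\le M_{\ell_\varepsilon}\,p_{\ell_\varepsilon}(f-f_N)$. The whole head then collapses onto the single top semi-norm,
\[
    \sum_{\ell=0}^{\ell_\varepsilon}\frac{1}{2^\ell}\frac{p_\ell(f-f_N)}{1+p_\ell(f-f_N)}
    \le
    M_{\ell_\varepsilon}\,p_{\ell_\varepsilon}(f-f_N)\sum_{\ell=0}^{\ell_\varepsilon}\frac{1}{2^\ell}
    \le
    2\,M_{\ell_\varepsilon}\,p_{\ell_\varepsilon}(f-f_N).
\]
Now I would take a near-minimizer $f_N\in\Sigma_N$ and apply the rate bound \cref{eq:assumption_rate:upwards} to get $p_{\ell_\varepsilon}(f-f_N)\le C_f\,r_{\ell_\varepsilon}(N)$. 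Since $r_{\ell_\varepsilon}$ is decreasing and bijective onto $(0,\bar r_{\ell_\varepsilon}]$, the prescribed $N\ge N_{\ell_\varepsilon}$ yields $r_{\ell_\varepsilon}(N)\le\min\{r_{\ell_\varepsilon}(1),(2^{\ell_\varepsilon}C_fM_{\ell_\varepsilon})^{-1}\}\le(2^{\ell_\varepsilon}C_fM_{\ell_\varepsilon})^{-1}$; the inner minimum against $r_{\ell_\varepsilon}(1)=\bar r_{\ell_\varepsilon}$ is precisely what keeps the argument inside the domain of $r_{\ell_\varepsilon}^{-1}$, so that the displayed $N_{\ell_\varepsilon}$ is well defined. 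Substituting back bounds the head by $2^{1-\ell_\varepsilon}$, and combining it with the tail closes the estimate against the chosen $\ell_\varepsilon$.

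The conceptual heart is the collapse in the head: hypothesis \cref{eq:frechet:MlBound:upwards} is precisely the device that converts knowledge of the single rate $r_{\ell_\varepsilon}$ into simultaneous control of every lower semi-norm, which is why only one rate must be known. The step demanding the most care is the final quantitative combination: one must choose the cutoff $\ell_\varepsilon$ and the threshold $(2^{\ell_\varepsilon}C_fM_{\ell_\varepsilon})^{-1}$ so that the tail bound $2^{-\ell_\varepsilon}$ and the head bound $2^{1-\ell_\varepsilon}$ add up to strictly less than $\varepsilon$; this is what pins down the logarithmic form of $\ell_\varepsilon$ and the power of $2$ in the threshold, and it is the only place where the geometric factor $\sum_{\ell=0}^{\ell_\varepsilon}2^{-\ell}<2$ from the head has to be paid for. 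Everything outside this bookkeeping reduces to the two elementary inequalities for $t/(1+t)$ and to inverting the rate; no compactness or completeness of $\targetSpace{}$ is invoked, and separability of the semi-norm family is used only to know that $d_{\targetSpace{}}$ metrizes the topology.
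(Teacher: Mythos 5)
Your proposal follows essentially the same route as the paper's own proof: split the Fr\'echet sum at a finite cutoff, bound the tail by the geometric remainder $2^{-\ell_\varepsilon}$, collapse the head onto the single top semi-norm via the monotonic-growth hypothesis \cref{eq:frechet:MlBound:upwards}, apply the rate bound \cref{eq:assumption_rate:upwards}, and invert the rate, with the minimum against $r_{\ell_\varepsilon}(1)$ serving exactly the purpose you describe (keeping the argument inside the domain of $r_{\ell_\varepsilon}^{-1}$). The only cosmetic difference is the order of operations in the head: you first use $t/(1+t)\le t$ and then the growth condition, whereas the paper pushes the growth condition inside the quotient (using that $t\mapsto t/(1+t)$ is increasing) and only drops the quotient afterwards.

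There is, however, a quantitative gap in your last step. Your (correct) head bound is $2M_{\ell_\varepsilon}C_f r_{\ell_\varepsilon}(N)\le 2^{1-\ell_\varepsilon}$ and your tail bound is $2^{-\ell_\varepsilon}$, so the total is $3\cdot 2^{-\ell_\varepsilon}$; with $\ell_\varepsilon=\lceil-\log_2(\varepsilon)\rceil+1$ this only gives $3\cdot 2^{-\ell_\varepsilon}\le\tfrac{3}{2}\varepsilon$, so the claim that head and tail ``add up to strictly less than $\varepsilon$'' is unjustified for the stated cutoff and threshold. You should know that the paper's proof conceals the same factor: its step $\frac{t}{1+t}(2-2^{-\ell})\le t$ with $t=C_fM_\ell r_\ell(N)$ is equivalent to $t\ge 1-2^{-\ell}$ and therefore fails precisely in the regime $t\le 2^{-\ell}$ that the proof then enters; honest accounting there also produces $3\cdot 2^{-\ell}$. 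Either argument is repaired, without changing the asymptotics in $\varepsilon$ or $N$, by taking $\ell_\varepsilon=\lceil-\log_2(\varepsilon)\rceil+2$ (total $\le\tfrac{3}{4}\varepsilon$), or by replacing the threshold $\frac{1}{2^{\ell_\varepsilon}C_fM_{\ell_\varepsilon}}$ with $\frac{1}{2^{\ell_\varepsilon+1}C_fM_{\ell_\varepsilon}}$ in the definition of $N_{\ell_\varepsilon}$: then the head is at most $2^{-\ell_\varepsilon}$, the tail is strictly below $2^{-\ell_\varepsilon}$ (each quotient is $<1$), and the total is strictly below $2^{1-\ell_\varepsilon}\le\varepsilon$. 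A last minor point: since \cref{eq:assumption_rate:upwards} controls only the infimum, your near-minimizer should carry an extra $\delta>0$ that is sent to zero at the end (or one keeps the infimum throughout and uses monotonicity and continuity of $t\mapsto t/(1+t)$, as the paper does); this is routine but worth stating.
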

It is worth noting that \cref{eq:frechet:MlBound:upwards} is only a mild assumption, as it can be satisfied by various norms, including the Sobolev norms with increasing regularity and/or domain or $L^p$ norms with increasing domain.
The major difficulty in the proof of this theorem is that the infimum is jointly over all $\ell$ which means, that we cannot bound the error immediately by the approximation rate.
Similarly, pulling the infimum inside the sum would allow to apply the approximation result,
however, doing so would mean that the approximation possibly results in different solutions $f_N$ for each $\ell$.
Instead, our proof is based on a standard trick that for handling this type of \Frechet{} metrics.
Namely, we split the series at some finite $\ell$, which lets us bound the tail of the sum by a constant that depends only on $\ell$ and for the leading $\ell$ terms we provide an upper bound in terms of an approximation rate.
Overall, we can then derive the necessary order of the approximation class.
Note that, if $r$ is the Monte Carlo rate i.e., $r(N) = N^{-\frac{1}{2}}$, then it is straightforward that $\bar{r}=1$. Conversely, if $r$ is an exponential rate function $r(N) = e^{-N^\beta}$ for $\beta\in (0,1)$ then $\bar{r} = \frac{1}{e}$.
\begin{proof}
For a fixed $\ell\in\zzp{}$, first, we split $d_\targetSpace{}$ into a sum over the first $\ell$ elements and the remaining tail.
The finite sum can be bound by its last element using \cref{eq:frechet:MlBound:upwards} and using the fact that $t\mapsto t/(1+t)$ is strictly monotonically increasing.
The remaining sums can then be evaluated in closed form
\begin{equation}
\label{eq:truncated_bound}
\begin{split}
    d_{\targetSpace{}}(f-f_\nNeuron)
    &=
    \sum_{k=0}^\infty
        \frac{1}{2^k}
        \frac{\seminorm{f-f_\nNeuron}[k]}{1+\seminorm{f-f_\nNeuron}[k]}
    \\&
    =
    \sum_{k=0}^{\ell}
        \frac{1}{2^k}
        \frac{\seminorm{f-f_\nNeuron}[k]}{1+\seminorm{f-f_\nNeuron}[k]}
    +
    \sum_{k=\ell+1}^\infty
        \frac{1}{2^k}
        \frac{\seminorm{f-f_\nNeuron}[k]}{1+\seminorm{f-f_\nNeuron}[k]}
    \\&
    \leq
    \frac{M_{\ell}\seminorm{f-f_\nNeuron}[\ell]}{1+M_{\ell}\seminorm{f-f_\nNeuron}[\ell]}
    \sum_{k=0}^{\ell}
        \frac{1}{2^k}
    +
    \sum_{k=\ell+1}^\infty
        \frac{1}{2^k}
    \\&
    \leq
    (2-2^{-\ell})
    \frac{M_{\ell}\seminorm{f-f_\nNeuron}[\ell]}{1+M_{\ell}\seminorm{f-f_\nNeuron}[\ell]}
    +
    2^{-\ell}.
\end{split}
\end{equation}
As a next step we apply the infimum and due to the monotonicity and the continuity of $t\to t/(1+t)$ for $t\geq 0$, we can transition to the approximation rate $r_\ell$.
\begin{align*}
    \inf_{f_\nNeuron\in\Sigma_\nNeuron}
    d_{\targetSpace{}}(f-f_\nNeuron)
    &
    \leq
    \inf_{f_\nNeuron\in\Sigma_\nNeuron}
    (2-2^{-\ell})
    \frac{M_{\ell}\seminorm{f-f_\nNeuron}[\ell]}{1+M_{\ell}\seminorm{f-f_\nNeuron}[\ell]}
    +
    2^{-\ell}
    \\&
    \leq
    \frac{C_fM_{\ell}r_{\ell}(\nNeuron)}{1+C_fM_{\ell}r_{\ell}(\nNeuron)}
    (2-2^{-\ell}) 
    +
    2^{-\ell}
    \\&\leq
    C_fM_{\ell}r_{\ell}(\nNeuron)
    +
    2^{-\ell}.
\end{align*}
In order to determine a sufficient condition on $\nNeuron$ for achieving an error of at most $\varepsilon>0$, we now define
\begin{align*}
    \set{K}_\ell=\{N\in\nn{}: C_f M_\ell r_\ell(\nNeuron)\leq 2^{-\ell}\}.
\end{align*}
With $r_\ell$ being bijective and decreasing and $2^\ell C_fM_\ell\geq 1$ we can express the minimum of $\set{K}_\ell$ in closed form as
\begin{align*}
    N_\ell
    =
    \min\,\set{K}_\ell
    =
    \left\lceil r_\ell^{-1}\left(\min\left\{r_\ell(1),\frac{1}{2^\ell C_fM_\ell}\right\}\right)\right\rceil
    .
\end{align*}
Thus, for $\nNeuron\geq N_\ell$ we have
\begin{align*}
    \inf_{f_\nNeuron\in\Sigma_\nNeuron}
    d_{\targetSpace{}}(f-f_\nNeuron)
    &
    \leq
    C_fM_\ell r_\ell(N)+2^{-\ell}
    \leq
    C_fM_\ell r_\ell(N_\ell)+2^{-\ell}
    \leq
    2^{-\ell+1}.
\end{align*}
By choosing 
$\ell_\varepsilon=\lceil-\log_2(\varepsilon)\rceil+1$
we achieve an error
\begin{align*}
    \inf_{f_\nNeuron\in\Sigma_\nNeuron}d_{\targetSpace{}}(f-f_\nNeuron)
    \leq C_fM_{\ell_\varepsilon} r_{\ell_\varepsilon}(N_{\ell_\varepsilon})+2^{-{\ell_\varepsilon}}
    \leq2^{-\lceil-\log_2(\varepsilon)\rceil}
    <\varepsilon
\end{align*}
for $N\geq N_{\ell_\varepsilon}$.
\end{proof}
The central element of this theorem is the sequence of semi-norms that induces the topology of the approximation space, as well as the necessity of understanding the corresponding approximation rates.
Trivially, this assumption can be fulfilled by considering a constant sequences $\seminorm{\cdot}[k]=\norm{\cdot}{}$ for some norm $\norm{\cdot}{}$ (note that it is necessary to consider a norm in order to fulfill the separability).
This allows to consider $M_\ell= 1$ for all $\ell\in\zzp{}$, and therefore
\begin{align*}
    N_\ell=\left\lceil r^{-1}\left(2^{-\lceil-\log_2(\varepsilon)\rceil-1}\right)\right\rceil.
\end{align*}

\begin{remark}
    Note that the fact that pseudodifferential operators can be written as linear operators with symbols \cite{Abdeljawad19PseudoDifferentialCalculusAnisotropic}, using the previous theorem one can extend the result to an approximation bound for operators, in the sense that, $\operatorname{Op}[f]$ is a pseudodifferential operator with symbol $f$ defined as follows:
    \[
        \operatorname{Op}[f]u(x) := \mathcal{F}^{-1}\left( f(\xi)\hat{u}(\xi)\right)(x).
    \]
    The previous identity, combined with the result of the current section, yields an expressivity result for neural networks when approximating pseudodifferential operators under more restrictive assumptions on the metric.
\end{remark}

\subsection{Approximation of Exponential Spectral Barron Space}

A more interesting setting arises with the exponential spectral Barron space.
In this case, we consider the sequence of Sobolev norms, i.e., $\seminorm{\cdot}[k]=\norm{\cdot}[H^k(\set{U})]$, for which the growth condition is fulfilled with $M_\ell = 1$ for all $\ell\in\zzp{}$.
By restricting to functions in the exponential spectral Barron space ensures that the functions belong to $\bigcap_{\ell\in\zzp{}}H^\ell(\set{U})$.
Furthermore,  by \cite[Theorem 2]{Siegel22HighOrderApproximationRates} we obtain  an approximation rate $r_\ell$ for all $\ell\in\zzp{}$.
Overall, this leads to the following approximation result.
\begin{corollary}[Approximation of Exponential Spectral Barron Space]
\label{cor:sbs_upwards}
    Let $\set{U}=[0,1]^d$, $c>0$, $\beta \in (0,1)$ and $\Sigma_{N,M}$ be the class of shallow networks with $\nNeuron$ neurons and cosine activation function where $\nNeuron\in \nn{}$ and $M>0$ (see \cref{eq:ExpSpecBarronSpace}).
    Then there exists sequences $\{c_\ell\}_{\ell=0}^\infty\subset\rr{}_{>0}$ and $\{C_\ell\}_{\ell=0}^\infty\subset\rr{}_{>0}$ such that for any $f\in\barron{\beta,c}[\set{U}]$ and $M\lesssim \norm{f}[\barron{\beta,c}[\set{U}]]$ we get that
    \begin{align*}
        \inf_{f_\nNeuron\in\Sigma_{N,M}}d_{\targetSpace{}}(f-f_\nNeuron)\leq \varepsilon
    \end{align*}
    for any $\epsilon>0$ and 
    \begin{align*}
        N
        \geq
        \max\left\{
        1,
        \left\lceil
            \left(
            \frac{1}{c_{\ell_\varepsilon}}
            \ln\left(2^{\ell_\varepsilon}
                {C_{\ell_\varepsilon}\norm{f}[\barron{\beta,c}[\set{U}]]}
            \right)
            \right)^{\frac{d}{\beta}}
        \right\rceil
        \right\}
        \qquad\text{with}\qquad
        \ell_\varepsilon:=\lceil-\log_2(\varepsilon)\rceil+1.
    \end{align*}
\end{corollary}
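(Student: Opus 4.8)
The plan is to obtain this corollary as a direct specialization of \cref{thm:approximation_Frechet_metric:upwards}, with $\targetSpace{}=\barron{\beta,c}[\set{U}]$, the semi-norms $\seminorm{\cdot}[\ell]=\norm{\cdot}[{H^\ell(\set{U})}]$, and the approximation classes $\Sigma_\nNeuron:=\Sigma_{\nNeuron,M}$ of cosine-activated shallow networks from \cref{eq:ExpSpecBarronSpace}. First I would check the three structural hypotheses of the theorem. The sequence $\{\norm{\cdot}[{H^\ell(\set{U})}]\}_{\ell}$ is separating because every $H^\ell$-norm is already a genuine norm, not merely a semi-norm. The monotonic-growth condition \cref{eq:frechet:MlBound:upwards} holds with $M_\ell=1$ for all $\ell$, since the Sobolev norms are nested, $\norm{f}[{H^k(\set{U})}]\leq\norm{f}[{H^\ell(\set{U})}]$ whenever $k\leq\ell$. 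Finally, the embedding $\barron{\beta,c}[\set{U}]\subset\bigcap_{\ell\in\zzp{}}H^\ell(\set{U})$ ensures that every target function lies in the domain of all the semi-norms, so that the setup is well posed.

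The second step is to feed the theorem its per-semi-norm approximation rate \cref{eq:assumption_rate:upwards}, which I would take from \cref{prop:siegel_exp:explicit} applied with $m=\ell$. That proposition furnishes, for each $\ell$, constants $c_\ell,C_\ell>0$ with
\[
\inf_{f_\nNeuron\in\Sigma_{\nNeuron,M}}\norm{f-f_\nNeuron}[{H^\ell(\set{U})}]
\leq
C_\ell\,\norm{f}[{\barron{\beta,c}[\set{U}]}]\,e^{-c_\ell \nNeuron^{\beta/d}}.
\]
To match the required form $C_f\,r_\ell(\nNeuron)$, I would set $C_f:=\norm{f}[{\barron{\beta,c}[\set{U}]}]$ and $r_\ell(\nNeuron):=C_\ell e^{-c_\ell \nNeuron^{\beta/d}}$; here the $\ell$-dependent prefactor $C_\ell$ must be absorbed into the rate $r_\ell$ and not into $C_f$, because the theorem permits $C_f$ to depend on $f$ but not on $\ell$. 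A brief verification shows that $r_\ell\colon[1,\infty)\to(0,\bar r_\ell]$ with $\bar r_\ell=r_\ell(1)=C_\ell e^{-c_\ell}$ is decreasing and bijective, with explicit inverse $r_\ell^{-1}(y)=\big(c_\ell^{-1}\ln(C_\ell/y)\big)^{d/\beta}$.

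With these identifications the conclusion of \cref{thm:approximation_Frechet_metric:upwards} applies, and the remaining task is purely to render the abstract threshold $N_{\ell_\varepsilon}$ explicit. Substituting $M_{\ell_\varepsilon}=1$, $C_f=\norm{f}[{\barron{\beta,c}[\set{U}]}]$, and the closed-form inverse into
\[
N_{\ell_\varepsilon}=\left\lceil r_{\ell_\varepsilon}^{-1}\!\left(\min\left\{r_{\ell_\varepsilon}(1),\tfrac{1}{2^{\ell_\varepsilon}C_f}\right\}\right)\right\rceil,
\]
and evaluating on the branch where the minimum equals $1/(2^{\ell_\varepsilon}\norm{f}[{\barron{\beta,c}[\set{U}]}])$ reproduces precisely the term $\lceil(c_{\ell_\varepsilon}^{-1}\ln(2^{\ell_\varepsilon}C_{\ell_\varepsilon}\norm{f}[{\barron{\beta,c}[\set{U}]}]))^{d/\beta}\rceil$ appearing in the statement. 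On the complementary branch the minimum equals $r_{\ell_\varepsilon}(1)$, whence $r_{\ell_\varepsilon}^{-1}(r_{\ell_\varepsilon}(1))=1$; there I would note that $1/(2^{\ell_\varepsilon}\norm{f}[{\barron{\beta,c}[\set{U}]}])\geq C_{\ell_\varepsilon}e^{-c_{\ell_\varepsilon}}$ forces $\ln(2^{\ell_\varepsilon}C_{\ell_\varepsilon}\norm{f}[{\barron{\beta,c}[\set{U}]}])\leq c_{\ell_\varepsilon}$, so the logarithmic expression is at most $1$ and the outer $\max\{1,\cdot\}$ collapses to $1=N_{\ell_\varepsilon}$. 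Hence the single formula $N\geq\max\{1,\lceil(\cdots)^{d/\beta}\rceil\}$ captures both cases, finishing the derivation. I expect the main obstacle to be bookkeeping rather than genuine analysis: the delicate points are keeping the $\ell$-dependent constant $C_\ell$ inside $r_\ell$ so that $C_f$ stays independent of $\ell$, and verifying that the two cases of the $\min$ in the theorem line up exactly with the two entries of the $\max$ in the corollary.
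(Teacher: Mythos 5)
Your proposal is correct and follows essentially the same route as the paper's own proof: both invoke \cref{thm:approximation_Frechet_metric:upwards} with the Sobolev semi-norms $\norm{\cdot}[{H^\ell(\set{U})}]$, $M_\ell=1$, and the rate $r_\ell(N)=C_\ell e^{-c_\ell N^{\beta/d}}$ supplied by \cref{prop:siegel_exp:explicit}, keeping the $\ell$-dependent hidden constant inside $r_\ell$ and $C_f=\norm{f}[{\barron{\beta,c}[\set{U}]}]$ outside. Your write-up simply makes explicit the bookkeeping (the closed-form inverse $r_\ell^{-1}$ and the matching of the $\min$ branches with the $\max\{1,\cdot\}$ in the statement) that the paper leaves implicit.
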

\begin{proof}
    The result of the corollary is a direct implication of \cref{thm:approximation_Frechet_metric:upwards} combined with
    \cite[Theorem 2]{Siegel22HighOrderApproximationRates} (see \cref{prop:siegel_exp:explicit})
    with the rate function for each $\ell\in\nn{}$ being 
    \begin{align*}
        r_\ell(N)=C_\ell e^{-c_\ell N^{\frac{\beta}{d}}}.
    \end{align*}
    That is, for a given $\ell$ the constants $c_\ell$ and $C_\ell$ correspond to the constant $c^\prime$ in the exponent \(e^{-c^\prime N^{\frac{\beta}{d}}}\) and the hidden constant in the rate of \cref{prop:siegel_exp:explicit}, respectively.
\end{proof}

In reference to our initial target of approximating functions in the multipliers class, we observe that this approximation result is indeed applicable to certain classes of symbols that are defined in \cref{def:symbol}.
\begin{lemma}[Embedding in Multipliers Class]
\label{lem:symbol_embedding}
    Let $\set{U} \subseteq \rr{d}$,
    $c,\beta>0$, and $f\in\barron{\beta,c}[\set{U}]$, then we get
    \begin{align*}
        \abs{\partial^\alpha f(x)}
        \leq
        \norm{f}[\barron{\beta,c}[\set{U}]]
        \left(\frac{1}{c\beta}\right)^{\frac{\abs{\alpha}}{\beta}}\left(\abs{\alpha}!\right)^\frac{1}{\beta}
    \end{align*}
    for all $\alpha\in\zzp{d}$ and $x\in\set{U}$.
    Thus, $\barron{\beta,c}[\set{U}]\hookrightarrow \hormander{1,\ell}{}[\set{U}]$ for any $\ell\in\zzp{}$.
\end{lemma}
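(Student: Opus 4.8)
The plan is to exploit that membership in $\barron{\beta,c}[\set{U}]$ forces the Fourier data of an extension to decay faster than $e^{-c\abs{\xi}^\beta}$ in a weighted $L^1$ sense, and that exactly this decay controls every derivative of $f$ uniformly on $\set{U}$. Fix an extension $f_e\in L^1(\rr{d})$ with $f_e|_\set{U}=f$, so that $f=\F^{-1}\{\hat{f}_e\}$ on $\set{U}$. First I would justify differentiation under the integral sign: since $\int_{\rr{d}}e^{c\abs{\xi}^\beta}\abs{\hat{f}_e(\xi)}\dee\xi<\infty$ and (by the estimate below) $\abs{\xi}^{\abs{\alpha}}\lesssim e^{c\abs{\xi}^\beta}$, the integrand $(i\xi)^\alpha\hat{f}_e(\xi)e^{ix\cdot\xi}$ is dominated by a fixed integrable function uniformly in $x$. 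Hence
\[
\partial^\alpha f(x)=\frac{1}{(2\pi)^{d/2}}\int_{\rr{d}}(i\xi)^\alpha\hat{f}_e(\xi)e^{ix\cdot\xi}\dee\xi,
\]
and using $\abs{\xi^\alpha}\leq\abs{\xi}^{\abs{\alpha}}$ together with $\abs{e^{ix\cdot\xi}}=1$,
\[
\abs{\partial^\alpha f(x)}\leq\frac{1}{(2\pi)^{d/2}}\int_{\rr{d}}\abs{\xi}^{\abs{\alpha}}\abs{\hat{f}_e(\xi)}\dee\xi
\]
uniformly for $x\in\set{U}$.

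The key analytic step is a sharp pointwise comparison of the monomial weight against the exponential weight. Writing $n=\abs{\alpha}$ and maximizing $t\mapsto n\log t-ct^\beta$ over $t=\abs{\xi}>0$, the maximum is attained at $t^\beta=n/(c\beta)$, which yields
\[
\abs{\xi}^{n}\leq\left(\frac{n}{c\beta}\right)^{n/\beta}e^{-n/\beta}\,e^{c\abs{\xi}^\beta}.
\]
Inserting this into the integral bound and recalling that $\omega(\xi)=e^{c\abs{\xi}^\beta}$ gives
\[
\abs{\partial^\alpha f(x)}\leq\left(\frac{n}{c\beta}\right)^{n/\beta}e^{-n/\beta}\,\norm{\omega\hat{f}_e}[L^1(\rr{d})],
\]
where I have dropped the harmless factor $(2\pi)^{-d/2}\leq1$. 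Taking the infimum over all admissible extensions $f_e$ turns the right-hand factor into $\norm{f}[\barron{\beta,c}[\set{U}]]$.

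It remains to recast the constant $(n/(c\beta))^{n/\beta}e^{-n/\beta}$ into the stated form $(1/(c\beta))^{n/\beta}(n!)^{1/\beta}$. This is where the elementary inequality $n!\geq n^n e^{-n}$ (equivalently $e^n\geq n^n/n!$) enters: raising it to the power $1/\beta$ gives $(n!)^{1/\beta}\geq n^{n/\beta}e^{-n/\beta}$, so that
\[
\left(\frac{n}{c\beta}\right)^{n/\beta}e^{-n/\beta}=\left(\frac{1}{c\beta}\right)^{n/\beta}n^{n/\beta}e^{-n/\beta}\leq\left(\frac{1}{c\beta}\right)^{n/\beta}(n!)^{1/\beta},
\]
which is exactly the claimed derivative estimate. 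For the embedding I would then observe that this estimate renders every symbol semi-norm finite and controlled, namely $\norm{f}[\hormander{1,\ell}{}[\set{U}]]=\max_{\abs{\alpha}\leq\ell}\sup_{x\in\set{U}}\abs{\partial^\alpha f(x)}\leq C_\ell\,\norm{f}[\barron{\beta,c}[\set{U}]]$ with $C_\ell=\max_{\abs{\alpha}\leq\ell}(1/(c\beta))^{\abs{\alpha}/\beta}(\abs{\alpha}!)^{1/\beta}$; since this constant is finite for each $\ell$, the inclusion $\barron{\beta,c}[\set{U}]\hookrightarrow\hormander{1,\ell}{}[\set{U}]$ is continuous.

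I expect the main obstacle to be matching the constant in the \emph{exact} stated shape rather than merely up to an $\alpha$-dependent factor: the calculus optimization produces the sharp coefficient $(n/(c\beta))^{n/\beta}e^{-n/\beta}$, and one must recognize that the Stirling-type lower bound $n!\geq n^n e^{-n}$ is precisely the identity needed to absorb $n^{n/\beta}e^{-n/\beta}$ into a clean $(n!)^{1/\beta}$. By contrast, the differentiation under the integral, the reduction $\abs{\xi^\alpha}\leq\abs{\xi}^{\abs{\alpha}}$, and the uniformity in $x$ are routine once the weighted integrability of $\hat{f}_e$ is in hand.
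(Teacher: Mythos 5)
Your proposal is correct and follows essentially the same route as the paper's proof: bound $\partial^\alpha f$ through the weighted $L^1$ norm of $\hat{f}_e$, optimize $t\mapsto t^{\abs{\alpha}}e^{-ct^\beta}$ at $\bar{t}=(\abs{\alpha}/(c\beta))^{1/\beta}$, and absorb $\abs{\alpha}^{\abs{\alpha}/\beta}e^{-\abs{\alpha}/\beta}$ into $(\abs{\alpha}!)^{1/\beta}$ via the Stirling bound. The only (immaterial) difference is that you pass to the Barron norm by taking the infimum over all extensions directly, whereas the paper uses a sequence of near-optimal extensions and takes a limit; both are valid since the left-hand side is independent of the extension.
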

\begin{proof}
    The definition of the spectral Barron space is based on the infimum over the $\FL{1}{\omega}$-norm of all possible extensions.
    It is not guaranteed, that the infimum is indeed attained and therefore we show that the claim holds in the limit of a sequence of extensions that converges to the infimum.
    By definition for any $f\in\barron{\beta,c}[\set{U}]$, $\omega (\xi)= e^{c |\xi|^\beta}$, such a sequence of extensions $\{f_{e,n}\}_{n\in\nn{}}\subset L^1(\rr{d})$ with $\left.f_{e,n}\right|_{\set{U}}=f$ exists such that
    \begin{align*}
        \lim_{n\to\infty}\norm{f_{e,n}}[\FL{1}{\omega}]
        =\inf_{\substack{f_e\in L^1\\\left.f_e= f\right|_\set{U}}}\norm{f_e}[\FL{1}{\omega}]
        =\norm{f}[\barron{\beta,c}[\set{U}]],
    \end{align*}
    and
    \begin{align*}
        \norm{f_{e,n}}[\FL{1}{\omega}]
        =\norm{\hat{f}_{e,n}}[L^1(\omega)]
        =\int_{\rr{d}}\omega(\xi)\abs{\hat{f}_{e,n}(\xi)}\dee \xi
        <\infty.
    \end{align*}
    For the bound on the partial derivatives, we now continue as follows:
    \begin{align*}
        \abs{\partial^\alpha f_{e,n}(x)}
        &=
        \abs*{\int_{\rr{d}}e^{ix\cdot\xi}\xi^\alpha\hat{f}_{e,n}(\xi)\dee \xi}
        \leq
        \norm*{\xi^{\alpha}\hat{f}_{e,n}(\cdot)}[L^1]
        \\&
        \leq
        \norm{f_{e,n}}[\FL{1}{\omega}]
        \norm*{\abs{\cdot}^{\alpha}\omega(\cdot)^{-1}}[L^\infty].
    \end{align*}
    Only the second term is dependent on $\alpha$ and we continue its evaluation
    \begin{align}
        \norm*{\abs{\cdot}^{\abs{\alpha}}\omega(\cdot)^{-1}}[L^\infty]
        &=
        \sup_{\xi\in\rr{d}}\abs{\xi}^{\abs{\alpha}}e^{-c\abs{\xi}^\beta}
        =
        \sup_{t\geq 0}t^{\abs{\alpha}}e^{-c t^\beta}
        \label{eq:weight_bound}
    \end{align}
    for which we obtain the supremum for $\bar{t}$ with
    \begin{align*}
        0
        =
        \left.\frac{\dee}{\dee t}t^{\abs{\alpha}}e^{-c t^\beta}\right|_{t=\bar{t}}
        =
        \left.
        \abs{\alpha}t^{\abs{\alpha}-1}e^{-c t^\beta}
        -c\beta t^{\beta-1+\abs{\alpha}}e^{-c t^\beta}
        \right|_{t=\bar{t}}
    \end{align*}
    thus,
    \begin{align*}
        \bar{t}
        =
        \left(\frac{\abs{\alpha}}{c\beta}\right)^{\frac{1}{\beta}}.
    \end{align*}
    We insert this expression in \cref{eq:weight_bound} and apply the Stirling bound for the factorial to get
    \begin{align*}
        \sup_{t\geq 0}t^{\abs{\alpha}}e^{-c t^\beta}
        &=
        \left(\frac{\abs{\alpha}}{c \beta}\right)^{\frac{\abs{\alpha}}{\beta}}e^{-\frac{\abs{\alpha}}{\beta}}
        =
        \left(\frac{1}{c\beta}\right)^{\frac{\abs{\alpha}}{\beta}}\left(\left(\frac{\abs{\alpha}}{e}\right)^{\abs{\alpha}}\right)^\frac{1}{\beta}
        \\
        &\leq
        \left(\frac{1}{c\beta}\right)^{\frac{\abs{\alpha}}{\beta}}\left(\abs{\alpha}!\right)^\frac{1}{\beta}.
    \end{align*}
    Overall, we get for all $n\in\nn{}$ and $x\in\set{U}$
    \begin{align}\label{eq:GStype_ineq}
        \abs{\partial^\alpha f_{e,n}(x)}
        \leq
        \norm{f_{e,n}}[\FL{1}{\omega}]
        \left(\frac{1}{c\beta}\right)^{\frac{\abs{\alpha}}{\beta}}\left(\abs{\alpha}!\right)^\frac{1}{\beta}
    \end{align}
    and therefore the upper bound on $\abs{\partial^\alpha f(x)}$ follows by taking the limit $n\to\infty$.
\end{proof}

\subsection{Approximation of Gelfand-Shilov Space}

For functions in the exponential spectral Barron space $\barron{\beta,c}[\set{U}]$,
we now observe that any possible extension $f_e$ belongs to $L^1(\rr{d})$.
Therefore, 
\begin{align*}
\sup_{\xi\in\rr{d}}\hat{f}_e(\xi)<\infty
\end{align*}
and subsequently by multiplication with the exponential weight ($\beta,c >0$) we get the pointwise expression 
\begin{align*}
\abs{\hat{f}_e(\xi)}e^{c\abs{\xi}^\beta}<\infty\qquad\text{for all}\qquad\xi\in\rr{d}.\end{align*}
As $f\in\barron{\beta,c}[\set{U}]$, we may assume that $\norm{f_e}[\barron{\beta,c}]<\infty$.
Due to $\hat{f}_e\in L^{1}(e^{c\abs{\cdot}^\beta})$ we have
\begin{align*}
    \lim_{\abs{\xi}\to\infty}\abs{\hat{f}_{e}(\xi)}e^{c\abs{\xi}^\beta}=0,
\end{align*}
therefore, not only $\abs{\hat{f}_{e}(\xi)}\omega(\xi)<\infty$ but also $\sup_{\xi\in\rr{d}}\abs{\hat{f}_{e}(\xi)}e^{c\abs{\xi}^\beta}<\infty$.
Thus, we have that any extension $f_e$ of $f$ in the definition of the exponential spectral Barron space is a Gelfand-Shilov function, namely, $f_{e}\in S^\beta$.
Note, however, that this does not provide any information whether $f\in S^\beta$ because $f$ may not be defined outside its domain $\set{U}$.

Nevertheless, the question arises, whether the previous result can be extended to the Gelfand-Shilov spaces.
Indeed, we get that Gelfand-Shilov functions are members of certain parametrizations of the exponential spectral Barron space:
\begin{lemma}[Inclusion of Gelfand-Shilov Spaces]
    \label{lem:gs_embedding}
    Let $\set{U}\subset\rr{d}$ and $\beta>0$, then
    $f\in\GS{\beta}$ ($f\in\GS*{\beta}$) is in $\barron{\beta,h}[\set{U}]$ for some (for every) $h>0$.
\end{lemma}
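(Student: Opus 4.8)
The plan is to exploit the sharp exponential decay of $\hat f$ in the Fourier domain and to use $f$ itself as the competitor in the infimum defining the exponential spectral Barron norm over $\set{U}$. The first step is to translate the Gelfand--Shilov membership into a pointwise Fourier estimate via \cref{prop:S_def_equiv_fourier}: by the right-hand condition of \cref{it:GS:II}, $f\in\GS{\beta}$ is equivalent to the existence of \emph{some} $r>0$ with $\abs{\hat f(\xi)}\lesssim e^{-r\abs{\xi}^\beta}$, whereas $f\in\GS*{\beta}$ is equivalent to the same estimate holding for \emph{every} $r>0$. This already exhibits the quantifier structure that will match the ``some/every $h$'' dichotomy in the conclusion.

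Next I would verify that $f$ is an admissible extension of its own restriction $f|_\set{U}$. Since Gelfand--Shilov functions are contained in $\schwartz[\rr{d}]$, we have $f\in L^1(\rr{d})$ and trivially $f|_\set{U}=f|_\set{U}$, so $f$ is a legitimate choice of $f_e$ in the definition of $\barron{\beta,h}[\set{U}]$. Writing the weight as $\omega(\xi)=e^{h\abs{\xi}^\beta}$, I then bound the Barron norm by evaluating this single extension:
\begin{align*}
    \norm{f}[{\barron{\beta,h}[\set{U}]}]
    \leq
    \int_{\rr{d}} e^{h\abs{\xi}^\beta}\abs{\hat f(\xi)}\dee\xi
    \lesssim
    \int_{\rr{d}} e^{-(r-h)\abs{\xi}^\beta}\dee\xi ,
\end{align*}
and the last integral is finite whenever $h<r$, since after passing to polar coordinates it reduces to a convergent one-dimensional integral with a strictly negative exponent in the exponential.

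It remains to match the quantifiers. If $f\in\GS{\beta}$, the decay holds for one fixed $r>0$, so choosing any $h\in(0,r)$ yields $f\in\barron{\beta,h}[\set{U}]$, i.e.\ membership for some $h>0$. If $f\in\GS*{\beta}$, the decay is available for every $r$; given an arbitrary target $h>0$ I apply the estimate with, say, $r=2h$, obtaining $f\in\barron{\beta,h}[\set{U}]$ for every $h>0$. This reproduces exactly the parenthetical structure of the statement.

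The one delicate point is the \emph{admissibility} of the extension, which relies on $f\in L^1(\rr{d})$; I would justify this through the inclusion of the Gelfand--Shilov space into $\schwartz[\rr{d}]$. Should one prefer a variant of $\GS{\beta}$ carrying only Fourier-side decay (and hence no a priori integrability of $f$), the same estimate survives after replacing $f$ by $\chi f$ for a Gelfand--Shilov cutoff $\chi$ equal to $1$ on the bounded set $\set{U}$, at the cost of a convolution $\hat\chi\ast\hat f$ that still preserves the exponential Fourier decay. Everything else is routine; the heart of the matter is simply that the exponential decay of $\hat f$ dominates the exponential Barron weight $\omega$ once $h$ is taken below the decay rate $r$.
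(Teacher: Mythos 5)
Your proof is correct and follows essentially the same route as the paper's: both translate $f\in S^\beta$ (resp.\ $f\in\Sigma^\beta$) into the Fourier-decay bound $\abs{\hat f(\xi)}\lesssim e^{-r\abs{\xi}^\beta}$ via \cref{prop:S_def_equiv_fourier}, take $f$ itself as the admissible $L^1$ extension in the Barron norm, absorb the weight $e^{h\abs{\xi}^\beta}$ into the decay by choosing $h<r$, and handle the some/every quantifiers identically. Your explicit attention to the $L^1(\rr{d})$ admissibility of the extension (which the paper dispatches with a single ``consequently''), including the cutoff fallback, is a minor refinement of the same argument rather than a different method.
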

\begin{proof}
    Let $f\in {S}^\beta(\rr{d})$ and consequently $f\in L^1(\rr{d})$, then there is an $h>0$ such that $f\in\FL{\infty}{e^{h\abs{\xi}^\beta}}$ and therefore, $f\in\FL{1}{e^{h^\prime\abs{\xi}^\beta}}$ for any $h^\prime$ with $0<h^\prime<h$, which concludes that $f\in\FL{1}{e^{h^\prime\abs{\xi}^\beta}}[\set{U}]$.
    For $f\in\Sigma^\beta(\rr{d})$ we can extend the argument analogously to all $h>0$ and get $f\in\FL{1}{e^{h^\prime\abs{\xi}^\beta}}[\set{U}]$ for any $h^\prime>0$.
\end{proof}
Note that in this embedding result, the choice of the parameter $h$ depends on the function $f$ for $f\in \GS{\beta}$, whereas it can be chosen freely for $f\in \GS*{\beta}$.
When extending the approximation results for the exponential spectral Barron space by means of this embedding result, the dependence of the parameter $h$ on the function $f$ directly reflects in the dependence of the sequences of constants in the rate function on $f$.
\begin{corollary}[Approximation of Gelfand-Shilov Space]
\label{cor:gs_upwards}
    Let $\set{U}=[0,1]^d$, $\beta \in (0,1)$ and $\Sigma_{N,M}$ be the class of shallow networks with $\nNeuron$ neurons and cosine activation function where $\nNeuron\in \nn{}$ and $M>0$.
    Then, for any $f\in S^\beta(\rr{d})$ ($f\in \Sigma^\beta(\rr{d})$)
    there exist sequences $\{c_\ell\}_{\ell=0}^\infty\subset\rr{}_{>0}$ and $\{C_\ell\}_{\ell=0}^\infty\subset\rr{}_{>0}$
    that are dependent on (independent of) $f$ such that
    for any $\epsilon>0$
    we get
    \begin{align*}
        \inf_{f_\nNeuron\in\Sigma_{N,M}}d_{\targetSpace{}}(f-f_\nNeuron)\leq \varepsilon
    \end{align*}
    when
    $M\lesssim \norm{f}[\barron{\beta,c}[\set{U}]]$ and
    \begin{align*}
        N
        \geq
        \max\left\{
        1,
        \left\lceil
            \left(
            \frac{1}{c_{\ell_\varepsilon}}
            \ln\left(2^{\ell_\varepsilon}
                {C_{\ell_\varepsilon}\norm{f}[\barron{\beta,c}[\set{U}]]}
            \right)
            \right)^{\frac{d}{\beta}}
        \right\rceil
        \right\}
        \quad\text{with}\quad
        \ell_\varepsilon:=\lceil-\log_2(\varepsilon)\rceil+1.
    \end{align*}
\end{corollary}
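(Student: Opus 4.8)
The plan is to reduce the claim to \cref{cor:sbs_upwards} by means of the embedding established in \cref{lem:gs_embedding}. First I would invoke the embedding: for $f\in S^\beta(\rr{d})$ there exists some $h>0$ (depending on $f$) with $f\in\barron{\beta,h}[\set{U}]$, while for $f\in\Sigma^\beta(\rr{d})$ the membership $f\in\barron{\beta,h}[\set{U}]$ holds for every $h>0$. In either case the Barron norm $\norm{f}[\barron{\beta,h}[\set{U}]]$ is finite, so all quantities appearing in the target bound are well-defined once we set $c:=h$.

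Next, with this choice of $c$ I would apply \cref{cor:sbs_upwards} verbatim to $f\in\barron{\beta,c}[\set{U}]$. This immediately produces sequences $\{c_\ell\}_{\ell=0}^\infty$ and $\{C_\ell\}_{\ell=0}^\infty$ of positive reals together with the stated lower bound on $N$ (involving $\ell_\varepsilon=\lceil-\log_2(\varepsilon)\rceil+1$) guaranteeing that $\inf_{f_\nNeuron\in\Sigma_{N,M}}d_{\targetSpace}(f-f_\nNeuron)\leq\varepsilon$, provided $M\lesssim\norm{f}[\barron{\beta,c}[\set{U}]]$. No new analytic estimate is needed, as the exponential rate $r_\ell(N)=C_\ell e^{-c_\ell N^{\beta/d}}$ has already been supplied by \cref{prop:siegel_exp:explicit} through \cref{cor:sbs_upwards}.

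The only point requiring care, and the place where the two cases genuinely differ, is the dependence structure of the constants. The sequences $\{c_\ell\}$ and $\{C_\ell\}$ originate from the rate function $r_\ell$, whose constants are governed by the weight parameter $c=h$ of the Barron space in use. For $f\in S^\beta$ the admissible $h$ is tied to $f$ via \cref{lem:gs_embedding}, so the constants inherit a dependence on $f$; this matches the ``for some'' wording in the statement. For $f\in\Sigma^\beta$, by contrast, the embedding holds for all $h>0$, so I would fix a single $h$ once and for all, independently of $f$, rendering $\{c_\ell\}$ and $\{C_\ell\}$ uniform over $\Sigma^\beta$, as claimed. Beyond this bookkeeping there is no substantive obstacle, since the heavy lifting has already been carried out in \cref{cor:sbs_upwards} and \cref{lem:gs_embedding}; the corollary is thus a direct consequence of chaining these two results and tracking how the freedom in $h$ propagates to the rate constants.
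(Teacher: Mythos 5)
Your proposal is correct and follows exactly the paper's own argument: the paper proves this corollary by directly combining \cref{cor:sbs_upwards} with the embedding \cref{lem:gs_embedding}, with the dependence (or independence) of the constants on $f$ traced back to whether the admissible weight parameter $h$ in the embedding is tied to $f$ (the $S^\beta$ case) or can be fixed uniformly (the $\Sigma^\beta$ case). Your careful bookkeeping of how the freedom in $h$ propagates to the rate constants matches the remark the paper makes immediately before the corollary.
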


\begin{proof}
    The approximation result directly follows from \cref{cor:sbs_upwards} and \cref{lem:gs_embedding}.
\end{proof}

\section{\Frechet{}-Approximation with Semi-Norms of Bounded Growth}
\label{sec:bounded_growth}

For the general approximation result in \cref{thm:approximation_Frechet_metric:upwards} we require full knowledge of the approximation rate for all semi-norms in the sequence.
Obviously, this is a rather strong assumption and its relaxation requires further investigation.
In the present section, we first present an extension of \cref{thm:approximation_Frechet_metric:upwards}, in which we reduce the necessary knowledge of approximation rates to solely the first element of the sequence.
Secondly, we perform an analysis of cases, where this theorem with reduced necessary knowledge can be applied.
Namely, in \cref{sec:bounded_growth:sbs} we consider the exponential spectral Barron space and in \cref{sec:bounded_growth:bounded_derivatives} we consider other classes, for which the Sobolev norms fulfill the growth condition.

\begin{theorem}[Approximation with Bounded Semi-Norm-Growth]
\label{thm:approximation_Frechet_metric:downwards}
    Let 
    $\targetSpace{}$ be a vector space and
    $\{\seminorm{\cdot}[\ell]\}_{\ell=0}^\infty$
    be a separable sequence of semi-norms on $\targetSpace{}$.
    Assume that there is a non-decreasing sequence $\{M_{\ell}\}_{\ell=0}^\infty$
    such that
    for all $f\in\targetSpace{}$ we have
    \begin{align}
    \label{eq:frechet:MlBound:downwards}
        \seminorm{f}[\ell]
        \leq
        M_\ell \seminorm{f}[0]
        \qquad\text{for every}\qquad\ell \in \zzp{}.
    \end{align}
    Furthermore, for all $\nNeuron\in\nn{}$ let $\Sigma_\nNeuron\subset \targetSpace{}$ be the approximation class of order $\nNeuron$
    such that for all $f\in\targetSpace{}$
    there is 
    a decreasing bijective function $r:[1,\infty)\to(0,\bar{r}]$ (the rate) for some $\bar{r}>0$ such that
    \begin{align}
    \label{eq:assumption_rate:downwards}
        \inf_{f_\nNeuron\in\Sigma_\nNeuron}\seminorm{f-f_\nNeuron}[0]
        \leq C_fr(\nNeuron),
    \end{align}
    for some constant $C_f>0$ that may depend on $f$.
    Then, for any $\varepsilon>0$
    it is sufficient to choose
    \begin{align*}
        N
        \geq
        N_{\ell_\varepsilon}
        :=
        \left\lceil 
            r^{-1}
            \left(\min\left\{r(1),
                \frac{2^{-\ell_\varepsilon}}{C_fM_{\ell_\varepsilon}}\right\}
            \right)
        \right\rceil
        \qquad\text{with}\qquad
        \ell_\varepsilon:=\lceil-\log_2(\varepsilon)\rceil+1
    \end{align*}
    in order to achieve
    \begin{align*}
        \inf_{f_\nNeuron\in\Sigma_\nNeuron} d_{\targetSpace{}}(f-f_\nNeuron)
        =
        \inf_{f_\nNeuron\in\Sigma_\nNeuron}
            \sum_{\ell=0}^\infty\frac{1}{2^\ell}
            \frac{\seminorm{f-f_\nNeuron}[\ell]}
                {1+\seminorm{f-f_\nNeuron}[\ell]}
        < \varepsilon.
    \end{align*}
\end{theorem}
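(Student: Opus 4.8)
The plan is to follow the proof of \cref{thm:approximation_Frechet_metric:upwards} almost verbatim, changing only which semi-norm dominates the truncated sum: here the leading block will be controlled by $\seminorm{\cdot}[0]$ rather than by the top semi-norm, which is exactly what lets us get away with knowing the rate \cref{eq:assumption_rate:downwards} for $\seminorm{\cdot}[0]$ alone. First I would fix an index $\ell\in\zzp{}$ and split $d_\targetSpace(f-f_\nNeuron)$ at $\ell$ into its first $\ell+1$ terms plus the tail $\sum_{k=\ell+1}^\infty 2^{-k}(\cdots)$. Since $t\mapsto t/(1+t)$ is bounded by $1$, the tail is at most the geometric sum $\sum_{k=\ell+1}^\infty 2^{-k}=2^{-\ell}$, uniformly in $f_\nNeuron$.

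For the leading block I would invoke \cref{eq:frechet:MlBound:downwards} together with the hypothesis that $\{M_\ell\}$ is non-decreasing: for every $k\leq\ell$ this gives $\seminorm{f-f_\nNeuron}[k]\leq M_k\seminorm{f-f_\nNeuron}[0]\leq M_\ell\seminorm{f-f_\nNeuron}[0]$. Using that $t\mapsto t/(1+t)$ is increasing, each of the first $\ell+1$ terms is then bounded by $\frac{M_\ell\seminorm{f-f_\nNeuron}[0]}{1+M_\ell\seminorm{f-f_\nNeuron}[0]}$, and summing the weights $2^{-k}$ over $0\leq k\leq\ell$ supplies the factor $2-2^{-\ell}$. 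This reproduces the same shape of estimate as in the first theorem,
\[
    d_\targetSpace(f-f_\nNeuron)
    \leq
    (2-2^{-\ell})\,
    \frac{M_\ell\seminorm{f-f_\nNeuron}[0]}{1+M_\ell\seminorm{f-f_\nNeuron}[0]}
    +2^{-\ell},
\]
but now written entirely in terms of $\seminorm{\cdot}[0]$.

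Next I would take the infimum over $\Sigma_\nNeuron$; since $t\mapsto t/(1+t)$ is continuous and increasing, the infimum passes through to $\seminorm{f-f_\nNeuron}[0]$ and I can substitute the single approximation rate \cref{eq:assumption_rate:downwards}, obtaining $\inf_{f_\nNeuron}d_\targetSpace(f-f_\nNeuron)\leq C_fM_\ell r(\nNeuron)+2^{-\ell}$ after the same constant bookkeeping used in \cref{thm:approximation_Frechet_metric:upwards}. I would then introduce $\set{K}_\ell=\{N\in\nn{}:C_fM_\ell r(\nNeuron)\leq 2^{-\ell}\}$; because $r$ is decreasing and bijective onto $(0,\bar r]$ with $r(1)=\bar r$, its smallest element is $N_\ell=\lceil r^{-1}(\min\{r(1),2^{-\ell}/(C_fM_\ell)\})\rceil$, where taking the minimum with $r(1)$ keeps the argument inside the domain $(0,\bar r]$ of $r^{-1}$. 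For $\nNeuron\geq N_\ell$ this forces $d_\targetSpace(f-f_\nNeuron)\leq 2^{-\ell+1}$, and the choice $\ell_\varepsilon=\lceil-\log_2(\varepsilon)\rceil+1$ pushes the right-hand side below $\varepsilon$.

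The one genuinely new point compared to \cref{thm:approximation_Frechet_metric:upwards}, and the reason the statement asks for $\{M_\ell\}$ to be non-decreasing rather than merely that each $\seminorm{\cdot}[\ell]$ is dominated by $M_\ell\seminorm{\cdot}[0]$, is the uniform domination step, where I need a single constant $M_\ell$ bounding all of $\seminorm{\cdot}[0],\dots,\seminorm{\cdot}[\ell]$ against $\seminorm{\cdot}[0]$ so that the whole leading block collapses onto one occurrence of $\seminorm{\cdot}[0]$ before the infimum is taken. Apart from this, no compatibility between minimizers for different semi-norms is required: a single $f_\nNeuron$ that is accurate in $\seminorm{\cdot}[0]$ automatically controls every term through \cref{eq:frechet:MlBound:downwards}, so the joint infimum over all $\ell$, the main difficulty flagged for the first theorem, dissolves here.
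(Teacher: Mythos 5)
Your proposal is correct and takes essentially the same approach as the paper: the paper's proof likewise splits the metric at $\ell$, collapses the leading block onto $\seminorm{\cdot}[0]$ via \cref{eq:frechet:MlBound:downwards} together with the monotonicity of $\{M_\ell\}_{\ell=0}^\infty$ and of $t\mapsto t/(1+t)$, bounds the tail by $2^{-\ell}$, and then repeats the $N_\ell$ and $\ell_\varepsilon$ bookkeeping from \cref{thm:approximation_Frechet_metric:upwards} essentially word for word. Your closing observation — that a single approximant accurate in $\seminorm{\cdot}[0]$ automatically controls every term, so only the rate for the first semi-norm is needed — is precisely the point the paper makes when it says the finite part is bounded \emph{by its first element}.
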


\begin{proof}
The proof can be concluded in a similar way as in the proof of
\cref{thm:approximation_Frechet_metric:upwards}.
The difference is that we now bound the finite part by its first element using the bounded-growth-condition
\begin{align*}
    \inf_{f_\nNeuron\in\Sigma_\nNeuron}
    d_{\targetSpace{}}(f-f_\nNeuron)
    &\leq
    \inf_{f_\nNeuron\in\Sigma_\nNeuron}
    \sum_{k=0}^{\ell}
        \frac{1}{2^k}
        \frac{M_{\ell}\seminorm{f-f_\nNeuron}[0]}{1+M_{\ell}\seminorm{f-f_\nNeuron}[0]}
    +
    \sum_{k=\ell+1}^\infty
        \frac{1}{2^k}
    \\&
    \leq
    \frac{M_{\ell}C_fr(\nNeuron)}{1+M_{\ell}C_fr(\nNeuron)}
    (2-2^{-\ell})
    +
    2^{-\ell}.
\end{align*}
and then develop the necessary condition for $\nNeuron$ analogously to the proof of \cref{thm:approximation_Frechet_metric:upwards}.
\end{proof}

\subsection{Limitations for Approximation of The Exponential Spectral Barron Space}
\label{sec:bounded_growth:sbs}

In our analysis of the exponential spectral Barron space $\barron{\beta,c}[\set{U}]$ we use again the Sobolev norms $\{\norm{\cdot}[H^\ell(\set{U})]\}_{\ell=0}^\infty$ as our sequence of semi-norms.
In order to apply the result from \cref{thm:approximation_Frechet_metric:downwards} the approximation class needs to be in the class of target functions, i.e, $\Sigma_{N,M}\subset\barron{\beta,c}[\set{U}]$, and the bound on the growth condition must be fulfilled, i.e.,
\begin{align*}
    \norm{f}[H^\ell(\set{U})]^2
    =
    \sum_{\abs{\alpha}\leq\ell}\norm{\partial^\alpha f}[L^2(\set{U})]^2
    \leq
    M_{\ell}^2
    \norm{f}[L^2(\set{U})]^2
\end{align*}
for some sequence $\{M_{\ell}\}_{\ell=0}^\infty$.
In the following, we will see that in any case, we cannot fulfill all conditions and therefore, we cannot apply \cref{thm:approximation_Frechet_metric:downwards} to the exponential spectral Barron space with Sobolev error.
The argument for that will be different for the two cases $\beta\geq 1$
and $\beta<1$.

For $\beta\geq 1$ we see that the approximation class is not in our space of target functions by using the analyticity that was obtained in \cref{lem:symbol_embedding}.
\begin{proposition}
    Let $\beta\geq 1$, then $\Sigma_{N,M}\not\subset\barron{\beta,c}[\set{U}]$ for all $M>0$ and  $N\in\nn{}$.
\end{proposition}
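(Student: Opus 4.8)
The plan is to argue by contradiction, exploiting that the exponential weight with exponent $\beta\ge 1$ forces any admissible Fourier extension to be real-analytic on the whole space, which is incompatible with membership in $L^1(\rr{d})$ for a nonzero trigonometric polynomial. Since showing $\Sigma_{N,M}\not\subset\barron{\beta,c}[\set{U}]$ only requires producing one element that escapes the space, I would take any nonzero $f\in\Sigma_{N,M}$, for concreteness a single mode $f(x)=a_1 e^{i2\pi\theta_1\cdot x}$ with $0<\abs{a_1}\le M$, and suppose toward a contradiction that $f\in\barron{\beta,c}[\set{U}]$. Because the Barron norm is finite and is defined as an infimum over admissible extensions, there must exist at least one extension $f_e\in L^1(\rr{d})$ with $f_e|_{\set{U}}=f$ and $\int_{\rr{d}}e^{c\abs{\xi}^\beta}\abs{\hat f_e(\xi)}\dee\xi<\infty$ (if every admissible extension had infinite weighted norm, the infimum would be $+\infty$); I would fix one such $f_e$.

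Next I would show that this fixed extension is real-analytic on all of $\rr{d}$. The estimate behind \cref{lem:symbol_embedding} is the key input: its proof bounds $\abs{\partial^\alpha f_e(x)}$ by $\norm{f_e}[\FL{1}{\omega}]$ times $\left(\frac{1}{c\beta}\right)^{\abs{\alpha}/\beta}(\abs{\alpha}!)^{1/\beta}$, and since the $L^\infty$-factor in $\xi$ does not depend on $x$, this bound in fact holds for every $x\in\rr{d}$, not merely on $\set{U}$. For $\beta\ge 1$ we have $(\abs{\alpha}!)^{1/\beta}\le\abs{\alpha}!\le d^{\abs{\alpha}}\alpha!$, so the estimate collapses to $\abs{\partial^\alpha f_e(x)}\le A\,R^{\abs{\alpha}}\alpha!$ with $A=\norm{f_e}[\FL{1}{\omega}]$ and a uniform radius $R$; this is precisely a uniform real-analyticity bound (conceptually, $\int\hat f_e(\xi)e^{iz\cdot\xi}\dee\xi$ converges on a complex tube about $\rr{d}$). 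On the other hand $f_e$ coincides on the nonempty open set $\mathrm{int}(\set{U})$ with the entire trigonometric polynomial $p(x)=\sum_{n}a_n e^{i2\pi\theta_n\cdot x}$, which is itself real-analytic. Applying $g:=f_e-p$ to the identity theorem for real-analytic functions on the connected set $\rr{d}$ — $g$ is real-analytic and vanishes on a nonempty open subset, hence vanishes identically — I would conclude $f_e\equiv p$ on all of $\rr{d}$.

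Finally I would derive the contradiction from integrability: $p$ is a nonzero trigonometric polynomial, hence almost periodic with $\limsup_{\abs{x}\to\infty}\abs{p(x)}>0$ (for the single mode, $\abs{p(x)}\equiv\abs{a_1}>0$), so $p\notin L^1(\rr{d})$, contradicting $f_e\in L^1(\rr{d})$. As $N$ and $M$ were arbitrary, the inclusion fails for every $N$ and $M$. I expect the main difficulty to be conceptual rather than computational: one must make precise that $\beta\ge 1$ upgrades the Gevrey-$1/\beta$ regularity furnished by \cref{lem:symbol_embedding} to genuine real-analyticity on the entire space, which is exactly what lets the identity theorem propagate the local identity $f_e=p$ out to infinity, where the required $L^1$-decay cannot hold — this is also what distinguishes the present regime from the case $\beta<1$, where the Fourier weight is too weak to force analyticity. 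The two secondary points needing care are the existence of an honest finite-weight extension (from finiteness of the defining infimum) and the standard non-integrability of a nonzero trigonometric polynomial over $\rr{d}$; note also that the argument uses that $\set{U}$ has nonempty interior, which holds for $\set{U}=[0,1]^d$.
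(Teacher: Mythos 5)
Your proof is correct and takes essentially the same approach as the paper's: both use the pointwise bound of \cref{lem:symbol_embedding} to conclude that, for $\beta\geq 1$, any admissible $L^1$ extension is real analytic on all of $\rr{d}$, then invoke the uniqueness of real-analytic continuation to force that extension to coincide with the trigonometric function itself, which fails to be integrable, so no admissible extension exists. The only differences are presentational: you argue by contradiction with a single complex-exponential mode and spell out the Gevrey-to-analytic upgrade explicitly, while the paper argues directly with a cosine and cites the analyticity characterization from the literature.
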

\begin{proof}
For $\beta\geq 1$ and $\set{U}\subseteq\rr{d}$, the pointwise bound in \cref{lem:symbol_embedding} implies that any Barron function $g\in\barron{\beta,c}[\set{U}]$ is real analytic in the interior of $\set{U}$ (see \cite{Hormander98AnalysisLinearPartial,Komatsu60CharacterizationRealAnalytic,Gelfand16SpacesFundamentalGeneralized}).
From \cref{lem:symbol_embedding} we know that for $f\in\barron{\beta,c}[\set{U}]$ all extensions $f_e$ are Barron functions over the full domain, i.e., $f_e\in\barron{\beta,c}=\barron{\beta,c}[\rr{d}]$.
Thus, $f$ is real analytic over $\set{U}$, $f_e$ is real analytic over $\rr{d}$, and the real analytic extension $f_e$ of $f$ is unique (see \cite[Theorem 16.11]{Rudin87RealComplexAnalysis}).
It follows for $f(x)=M\cos(\theta\cdot x+b)$ with $\theta\neq 0$ that the only real analytic extension is $f_e(x)=M\cos(\theta\cdot x+b)$.
This extension is not in $L^1(\rr{d})$ which means that $\norm{f}[\barron{\beta,c}[\set{U}]]=\infty$ as the infimum is over the empty set.
\end{proof}

For the case $\beta\in (0,1)$ we observe that the growth condition implies
\begin{align}
\label{eq:L2:MlBound}
    \norm{\partial^\alpha f}[L^2(\set{U})]
    \leq
    M_{\ell}
    \norm{f}[L^2(\set{U})]
\end{align}
for all multiindices $\alpha\in\zzp{d}$ with $\abs{\alpha}\leq \ell$,
which implies that the linear operator $\partial^\alpha$ has to be bounded.
It is a well known result the partial derivatives as an operator from $C^\infty(\set{U})$ endowed with the norm $\norm{\cdot}[L^2(\set{U})])$ is unbounded.
We now show that this holds as well if we restrict the space of functions to $\barron{\beta,c}[\set{U}]$, that is, we consider the operator $\partial^\alpha:(\barron{\beta,c}[\set{U}],\norm{\cdot}[L^2(\set{U})])\to(\barron{\beta,c}[\set{U}],\norm{\cdot}[L^2(\set{U})])$.
\begin{proposition}
    \label{prop:expSBS:counterexample}
    Let $\beta\in (0,1)$, $c>0$, $d\in \nn{}$, and $\set{U}=[0,2\pi]^d$.
    Then the operator $\partial^\alpha$ is unbounded with respect to the $L^2(\set{U})$-norm for functions in $\barron{\beta,c}[\set{U}]$.
\end{proposition}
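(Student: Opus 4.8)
The plan is to contradict any inequality of the form \cref{eq:L2:MlBound} directly, by exhibiting an explicit sequence $\{f_n\}_{n\in\nn{}}\subset\barron{\beta,c}[\set{U}]$ along which the quotient $\norm{\partial^\alpha f_n}[L^2(\set{U})]/\norm{f_n}[L^2(\set{U})]$ diverges. Here $\alpha$ is any fixed multiindex with $\abs{\alpha}\geq 1$. The natural candidates are pure Fourier modes on the cube: I set $k_n:=(n,\dots,n)\in\zz{d}$ and take the real-valued functions $f_n(x):=\cos(k_n\cdot x)$. Since the exponentials $\{e^{im\cdot x}\}_{m\in\zz{d}}$ are orthogonal on $\set{U}=[0,2\pi]^d$, the $L^2(\set{U})$-norms of $f_n$ and of all its derivatives can be read off immediately, so the whole argument reduces to the single nontrivial point: checking that each $f_n$ really lies in $\barron{\beta,c}[\set{U}]$ with finite norm.

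The membership $f_n\in\barron{\beta,c}[\set{U}]$ is the crux, and it is where the hypothesis $\beta<1$ is used. Because the Barron norm is an infimum over $L^1$-extensions, it suffices to produce one admissible extension. First I would fix, once and for all, a compactly supported cutoff $\phi\in C^\infty(\rr{d})$ with $\phi\equiv 1$ on $\set{U}$ and with Fourier transform decaying faster than $e^{-c'\abs{\xi}^\beta}$ for every $c'>0$; concretely, a compactly supported Gevrey cutoff of order $s\in(1,1/\beta)$ has $\abs{\hat\phi(\xi)}\lesssim e^{-c''\abs{\xi}^{1/s}}$, and since $1/s>\beta$ this dominates $e^{-c'\abs{\xi}^\beta}$ for any $c'$ (such $\phi$ exists precisely because the class is non-quasianalytic for $\beta<1$, cf.\ \cite{Gelfand16SpacesFundamentalGeneralized}). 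Then $f_{e,n}:=\phi\cos(k_n\cdot\,)$ is an $L^1$-extension of $f_n$ with $\hat f_{e,n}(\xi)=\tfrac12\bigl(\hat\phi(\xi-k_n)+\hat\phi(\xi+k_n)\bigr)$. Using the subadditivity $\abs{\xi}^\beta\leq\abs{\xi\mp k_n}^\beta+\abs{k_n}^\beta$, valid for $0<\beta\leq 1$, one bounds the weighted $L^1$-norm by
\[
    \norm{f_n}[\barron{\beta,c}[\set{U}]]
    \leq
    \int_{\rr{d}}e^{c\abs{\xi}^\beta}\abs{\hat f_{e,n}(\xi)}\dee\xi
    \leq
    C_\phi\,e^{c\abs{k_n}^\beta}\int_{\rr{d}}e^{-(c'-c)\abs{\eta}^\beta}\dee\eta
    <\infty,
\]
where $c'>c$ is chosen in the cutoff construction and the remaining integral converges since $c'>c$ and $\beta>0$. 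Hence every $f_n$ belongs to $\barron{\beta,c}[\set{U}]$ (with norm growing at most like $e^{c\abs{k_n}^\beta}$), and the same $\phi$ extends each $\partial^\alpha f_n$, so the operator is well defined on these elements.

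It then remains to compute the two $L^2(\set{U})$-norms and conclude. Writing $\cos(k_n\cdot x)=\tfrac12(e^{ik_n\cdot x}+e^{-ik_n\cdot x})$ and invoking orthogonality of distinct exponential modes gives $\norm{f_n}[L^2(\set{U})]^2=\tfrac12(2\pi)^d$ for $n\geq 1$, while $\partial^\alpha f_n=k_n^\alpha\cos(k_n\cdot x+\tfrac{\pi}{2}\abs{\alpha})$ yields $\norm{\partial^\alpha f_n}[L^2(\set{U})]^2=\abs{k_n^\alpha}^2\,\tfrac12(2\pi)^d$. Since $k_n^\alpha=n^{\abs{\alpha}}$, the quotient equals $n^{\abs{\alpha}}$, which tends to $\infty$ as $n\to\infty$ whenever $\abs{\alpha}\geq 1$; this contradicts \cref{eq:L2:MlBound} for any candidate sequence $\{M_\ell\}$ and proves unboundedness. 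I expect the only genuinely delicate step to be the construction of the cutoff $\phi$ with sub-exponential Fourier decay beating the weight $e^{c\abs{\xi}^\beta}$; this is exactly where $\beta<1$ is indispensable, and it is consistent with the preceding proposition, which shows that for $\beta\geq 1$ no such compactly supported extension can exist at all.
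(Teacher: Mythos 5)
Your proof is correct and takes essentially the same route as the paper's: an explicit sequence of Fourier modes whose membership in the Barron space is secured by multiplying with a smooth, compactly supported cutoff equal to $1$ on $\mathcal{U}$ whose Fourier decay beats the weight $e^{c|\xi|^\beta}$, followed by the same $L^2$ computation showing the quotient grows like $n^{|\alpha|}$ (the paper uses $\frac{1}{\sqrt{\pi}}\sin(nx)$ in $d=1$ with a mollified indicator and extends by tensorization, while you use $\cos(k_n\cdot x)$ directly in $d$ dimensions with a Gevrey cutoff). Your explicit justification of the cutoff's existence via non-quasianalyticity of Gevrey classes of order $s\in(1,1/\beta)$ is a welcome clarification of a point the paper leaves implicit in its assumption that a mollifier $\phi\in\mathcal{B}_{\beta,c}$ with compact support exists.
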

In the proof, we use a classical counterexample for the unboundedness of $\partial^\alpha$ over $L^2(\set{U})$ for which we show that it still holds when limiting it to $\barron{\beta,c}[\set{U}]$.
\begin{proof}
We consider the case $d=1$; the extension to multiple dimensions can be carried out by taking the Cartesian product of this scalar function over the individual dimensions.
Similar to a text-book proof for the well know case $\partial^\alpha:(C^\infty(\set{U}),\norm{\cdot}[L^2(\set{U})])\to(C^\infty(\set{U}),\norm{\cdot}[L^2(\set{U})])$
we consider $\set{U}=[0,2\pi]$ and the sequence $\{f_n\}_{n\in\zzp{}}$, $f_n(x)=\frac{1}{\sqrt{\pi}}\sin(nx)$
for which we now show that for every $n\in\nn{}$ there is one specific extension $g_n$ of $f_n$ that is $g_n\in\barron{\beta,c}$.
Namely, we consider the smooth truncation
$$
g_n=f_n\cdot(\chF{U_\varepsilon}\conv\rho_\varepsilon)
\text{ with } U_\varepsilon=[-\varepsilon,2\pi+\varepsilon]\text{ and }\rho_\varepsilon=\frac{1}{\varepsilon}\phi(x/\varepsilon),
$$
where $\phi$ is a mollifier with $\phi\in\barron{\beta,c}$, $\phi(x)=0$ for $\abs{x}>\varepsilon$, and $\norm{\phi}{L^1}=1$.
By considering one specific extension, we derive an upper bound for the Barron norm, which is the infimum over all extensions.
To do so, we consider the Fourier transform of $\sin(nx)$ in the distributional sense, that is, $\mathscr{F}\{\sin(nx)\}(\xi)=\frac{1}{2in}(\delta_1(\xi/n)-i\delta_1(-\xi/n))$.
Then we get with $\omega(\xi)=e^{c\abs{\xi}^\beta}$
\begin{align*}
    \norm{f_n}[\barron{\beta,c}[\set{U}]]
    &\leq
    \norm{g_n}[\barron{\beta,c}[\set{U}]]
    =
    \norm{f_n(\chF{U_\varepsilon}\conv\rho_\varepsilon)}[\barron{\beta,c}[\set{U}]]
    \leq
    \norm{\hat{f}_n\conv(\widehat{\chF{U_\varepsilon}}\widehat{\rho_\varepsilon})}[L^1(\omega)]
    \\&
    =
    \int_{\rr{d}}\frac{1}{2n}\omega(\xi)
        \abs*{
            \widehat{\chF{\set{U}_\varepsilon}}{\left(\frac{\xi-n}{n}\right)}
            \widehat{\rho_\varepsilon}\left(\frac{\xi-n}{n}\right)
            -
            \widehat{\chF{\set{U}_\varepsilon}}{\left(\frac{\xi+n}{n}\right)}
            \widehat{\rho_\varepsilon}\left(\frac{\xi+n}{n}\right)}
    \dee \xi.
    \intertext{By means of the triangular inequality and seeing that $\widehat{\chF{\set{U}_\varepsilon}}$ is the $\operatorname{sinc}$-function which is uniformly bounded by the volume of $\set{U}_\varepsilon$ we further get}
    \norm{f_n}[\barron{\beta,c}[\set{U}]]&
    \leq
    \frac{1}{n}
    \int_{\rr{d}}
        \omega(\xi)
        \abs*{
            \widehat{\chF{\set{U}_\varepsilon}}{\left(\frac{\xi-n}{n}\right)}
            \widehat{\rho_\varepsilon}\left(\frac{\xi-n}{n}\right)}
    \dee \xi
    \\&
    \lesssim
    \frac{\abs{\set{U}_{\varepsilon}}}{n}
    \int_{\rr{d}}
        \omega(\xi)
        \abs*{
            \widehat{\rho_\varepsilon}\left(\frac{\xi-n}{n}\right)}
    \dee \xi
    \\&
    =
    \frac{\abs{\set{U}_{\varepsilon}}}{n}
    \int_{\rr{d}}
        \omega(\xi+n-n)
        \abs*{
            \widehat{\rho}_\frac{\varepsilon}{n}\left(\xi-n\right)}
    \dee \xi
    \intertext{and finally with the submultiplicativity of $\omega$}
    \norm{f_n}[\barron{\beta,c}[\set{U}]]&
    \lesssim
    \omega\left(n\right)
    \frac{\abs{\set{U}_{\varepsilon}}}{n}
    \int_{\rr{d}}
        \omega\left(\xi-n\right)
        \abs*{
            \widehat{\rho}_\frac{\varepsilon}{n}\left(\xi-n\right)}
    \dee \xi
    =
    \frac{\abs{\set{U}_{\varepsilon}}e^{cn^\beta}}{n}
    \norm{\widehat{\rho}_\frac{\varepsilon}{n}}[\barron{\beta,c}]
    <\infty.
\end{align*}
Thus, $f_n\in\barron{\beta,c}[\set{U}]$.
For the operator-norm of $\partial^k$, however, we see
\begin{align*}    
    \norm{f_n}[L^2(\set{U})]=1
\qquad\text{whereas}\qquad
    \norm{\partial^k f_n}[L^2(\set{U})]
    =
    \norm*{\frac{n^k}{\sqrt{\pi}}\sin\left(nx+\frac{k\pi}{2}\right)}[L^2(\set{U})]=n^k
\end{align*}
diverges for any $k\geq 1$ when $n\to\infty$, thus $\partial^k$ is unbounded.
\end{proof}

A similar argument about the boundedness of the Barron space can be found in \cref{app:structure_barron}.

\subsection{Approximation of Functions With Sobolev-Growth-Condition}
\label{sec:bounded_growth:bounded_derivatives}

Since the exponential spectral Barron space did not provide us with the growth condition on the semi-norms,
we will continue limiting to the class of functions for which this condition is indeed fulfilled.
That is, we will study a class of functions that has bounded derivatives.
\begin{definition}[Bounded-Derivative-Class]
Let $U\subset \rr{d}$ and be $\{M_\ell\}_{\ell=0}^\infty$ a positive sequence in $\rr {d}$. The class
$\Gamma(\set{U};\{M_\ell\}_{\ell=0}^\infty)$ consists of
all $f\in C^\infty (\rr {d})$ such that
\begin{equation*}
\norm{\partial^\alpha f}[L^2(\set{U})] \lesssim  M_{\abs{\alpha}}\norm{f}[L^2\set(U)].
\end{equation*}
\end{definition}
In the remainder of this section
we will analyze two special cases of this class that appear in the literature and we discuss their properties.

\subsubsection{Weighted Gevrey Class}
\label{sec:gevrey}

Similar to the multipliers class, another class of symbols that appears in the literature is the (weighted) class of Gevrey symbols \cite{Abdeljawad20LiftingsUltramodulationSpaces} which we define as follows.
\begin{definition}[Weighted Gevrey class]
Let $s\ge 0$ and $\omega$ is a weight function defined on $\rr {d}$. The class
$\Gamma^{(\omega)}_s(\rr {d})$ ($\Gamma ^{(\omega )}_{0,s}(\rr {d})$) consists of
all $f\in C^\infty (\rr {d})$ such that
\begin{equation*}
    \abs{\partial^\alpha f(x)} \lesssim  h^{\abs{\alpha}}\alpha !^s\omega (x),  \qquad x\in\rr{d},
    \end{equation*}
    for {\it some} $h>0$ (for {\it every} $h>0$).
\end{definition}
Note that this definition is is broader than
\cite[Definition 1.6]{Abdeljawad20LiftingsUltramodulationSpaces} in the sense that we do not ask for moderateness of the weight.
By doing so, we remove the restriction that
\begin{align*}
    e^{-r\abs{x}}\lesssim \omega(x)\lesssim e^{r\abs{x}},  \text{ for any } x\in \rr{d} \text{ and some } r>0,
\end{align*}
which enables us to use the Gaussian function $g(x) = e^{-\abs{x}^2}$ as a weight.
In the current section, we limit our analysis to the functions $f$ that are self-weighted Gevrey symbols, i.e.,
\begin{align*}
    \Gamma_s=\{f\in C^\infty(\set{U}):f\in\Gamma_s^{(f)}\}
\end{align*}
with $\set{U}\subset\rr{d}$.

As a first step, we now show that this class is indeed non-trivial, in the sense that it not only consists of the zero-function.

\begin{lemma}
\label{exa:gauss}
    For a bounded domain $\set{U}\subset\rr{d}$ with $R_{\set{U},i}=\sup_{x\in\set{U}}\abs{x_i}$ and $\abs{R_{\set{U}}}_2^2 = {\sum_{i=1}^dR_{\set{U},i}^2}$ we get
    \begin{align*}
        \abs{\partial^\alpha f(x)}
        &
        \leq
        e^{\frac{\abs{R_{\set{U}}}_2^2}{2}}
        k^d
        \sqrt{2}^{\abs{\alpha}}
        \sqrt{\abs{\alpha}!}
        \abs{f(x)}
    \end{align*}
    for $f(x)=e^{-\abs{x}^2}$.
    Therefore, $f\in\Gamma_s$.
\end{lemma}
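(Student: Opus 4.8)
The plan is to exploit the product structure of the Gaussian and reduce the whole estimate to a one-dimensional question about Hermite polynomials. Writing $f(x)=e^{-\abs{x}^2}=\prod_{i=1}^d e^{-x_i^2}$, the multi-index derivative factorizes as $\partial^\alpha f(x)=\prod_{i=1}^d \frac{\dee^{\alpha_i}}{\dee x_i^{\alpha_i}}e^{-x_i^2}$. First I would invoke Rodrigues' formula $\frac{\dee^{n}}{\dee x^{n}}e^{-x^2}=(-1)^n H_n(x)e^{-x^2}$ for the (physicists') Hermite polynomials $H_n$, which gives $\abs{\partial^\alpha f(x)}=\prod_{i=1}^d \abs{H_{\alpha_i}(x_i)}\,e^{-x_i^2}$. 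This converts the claimed derivative bound into a statement about the growth of $\abs{H_n(x)}$.

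The single non-routine ingredient is a Cramér-type uniform bound for Hermite functions: there is an absolute constant $k>0$ such that $e^{-x^2/2}\abs{H_n(x)}\le k\,2^{n/2}\sqrt{n!}$ for all $n\in\zzp{}$ and $x\in\rr{}$, which is just the uniform boundedness of the $L^2$-normalized Hermite functions restated for $H_n$ (with $k$ absorbing the normalizing factor $\pi^{1/4}$ and the Cramér constant). Splitting one exponential as $e^{-x_i^2}=e^{-x_i^2/2}\cdot e^{-x_i^2/2}$ and applying this estimate to the product $e^{-x_i^2/2}\abs{H_{\alpha_i}(x_i)}$ yields, in each coordinate, $\abs{H_{\alpha_i}(x_i)}\,e^{-x_i^2}\le k\,2^{\alpha_i/2}\sqrt{\alpha_i!}\,e^{-x_i^2/2}$.

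Taking the product over $i=1,\dots,d$ then gives $\abs{\partial^\alpha f(x)}\le k^d\,\sqrt{2}^{\abs{\alpha}}\,\sqrt{\alpha!}\,e^{-\abs{x}^2/2}$, where the $k^d$ and the base $\sqrt{2}$ accumulate the per-coordinate constants and $\alpha!=\prod_i\alpha_i!$. Two bookkeeping steps finish the argument: the multinomial inequality $\alpha!\le\abs{\alpha}!$ upgrades $\sqrt{\alpha!}$ to the stated $\sqrt{\abs{\alpha}!}$; and, on the bounded domain, I convert the leftover Gaussian back into the weight by writing $e^{-\abs{x}^2/2}=e^{\abs{x}^2/2}\abs{f(x)}$ and using $\abs{x}^2=\sum_i x_i^2\le\sum_i R_{\set{U},i}^2=\abs{R_{\set{U}}}_2^2$ for $x\in\set{U}$, so that $e^{\abs{x}^2/2}\le e^{\abs{R_{\set{U}}}_2^2/2}$. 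Combining the two produces exactly the asserted inequality. Since the resulting bound has the form $\abs{\partial^\alpha f(x)}\lesssim h^{\abs{\alpha}}\alpha!^{s}\abs{f(x)}$ with $h=\sqrt{2}$ and $s=\tfrac12$, and since $f=e^{-\abs{x}^2}>0$ serves as its own weight, we conclude $f\in\Gamma_{1/2}\subseteq\Gamma_s$.

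The hard part is really only locating and correctly normalizing the Hermite bound; everything downstream is elementary algebra. The one place to be careful is that the convention for $H_n$ must match the normalization $e^{-\abs{x}^2}$ rather than $e^{-\abs{x}^2/2}$, because both the emergent base $\sqrt{2}$ and the splitting $e^{-x^2}=e^{-x^2/2}e^{-x^2/2}$ that frees up the weight depend on this choice; a mismatch would spoil the identification of $h=\sqrt2$ and of the self-weight $f$.
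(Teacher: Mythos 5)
Your proof is correct and follows essentially the same route as the paper's: factorizing the Gaussian over coordinates, invoking Rodrigues' formula to express derivatives via Hermite polynomials, applying the Cram\'{e}r--Charlier uniform bound $e^{-x^2/2}\abs{H_n(x)}\le k\,2^{n/2}\sqrt{n!}$, bounding $e^{x_i^2/2}$ by $e^{R_{\set{U},i}^2/2}$ on the bounded domain, and using $\prod_i\alpha_i!\le\abs{\alpha}!$ to reach the stated form. Your explicit remark on matching the Hermite normalization to $e^{-x^2}$ (rather than $e^{-x^2/2}$) is a careful touch the paper leaves implicit, but the argument is the same.
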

\begin{proof}
    Let $d\geq 1$ and $f(x)=\prod_{i=1}^df_i(x_i)$ with $f_i(x_i)=e^{-x_i^2}$,
    then
    \begin{align*}
        \partial^\alpha f(x)
        =\prod_{i=1}^d f_i^{(\alpha_i)}(x_i)
        =(-1)^{\abs{\alpha}}f(x)\prod_{i=1}^dH_{\alpha_i}(x_i),
    \end{align*}
    where $H_n$ is the $n$-th Hermite polynomial.
    This leads to the bound (see \cite[Vol II, Page 208]{Bateman53HigherTranscendentalFunctions})
    \begin{align*}
        \abs{f_i^{(\alpha_i)}(x_i)}
        &=
        \abs{H_{\alpha_i}(x_i)}\abs{f_i(x_i)}
        \leq
        k\sqrt{\alpha_i!}\sqrt{2}^{\alpha_i}e^{\frac{x_i^2}{2}}\abs{f(x_i)},
    \end{align*}
    with $k$ being Charlier's constant.
    For a bounded domain with $R_{\set{U},i}=\sup_{x\in\set{U}}\abs{x_i}$ we can thus develop the uniform bound
    \begin{align*}
        \abs{f_i^{(\alpha_i)}(x_i)}
        \leq
        k\sqrt{\alpha_i!}\sqrt{2}^{\alpha_i}e^{\frac{R_{\set{U},i}^2}{2}}\abs{f(x_i)}
    \end{align*}
    which is
    \begin{align*}
        \abs{\partial^\alpha f(x)}
        &=
        \prod_{i=1}^d\abs{f_i^{(\alpha_i)}(x_i)}
        \leq
        \prod_{i=1}^d
            e^{\frac{R_{\set{U},i}^2}{2}}
            k\sqrt{\alpha_i!}
            \sqrt{2}^{\alpha_i}
            \abs{f(x_i)}
        \\&
        =
        e^{\frac{\abs{R_{\set{U}}}_2^2}{2}}
        k^d
        \sqrt{2}^{\abs{\alpha}}
        \abs{f(x)}
        \prod_{i=1}^d
            \sqrt{\alpha_i!}
        \\&
        \leq
        e^{\frac{\abs{R_{\set{U}}}_2^2}{2}}
        k^d
        \sqrt{2}^{\abs{\alpha}}
        \sqrt{\abs{\alpha}!}
        \abs{f(x)}
    \end{align*}
    where we use the notation
    $\abs{R_{\set{U}}}_2^2 = {\sum_{i=1}^dR_{\set{U},i}^2}$ and
    $\abs{\alpha} = \sum_{i=1}^d\abs{\alpha_i}$.
\end{proof}

Note that the initial assumption for this class of function was very restrictive, which might be impractical.
Some more observations and insights on the structure of this class of functions can be found in \cref{app:structure_gevrey}.

\subsubsection{Barron-Bandlimited Functions}

In the current section we consider approximation over the unbounded domain $\set{U}=\rr{d}$ and we restrict the target functions to certain types of bandlimited functions.
That is, function whose frequency spectrum is confined to a finite range of frequencies. In other works, a function $f$  is said to be bandlimited with bandwidth $\Omega>0$ if $\operatorname{supp}\hat{f}\subseteq [-\Omega,\Omega]^d$.
For these functions we get the growth condition of \cref{thm:approximation_Frechet_metric:downwards} in the form
\begin{align*}
    \norm{f}[H^k]
    &
    =
    \norm{\eabs{\cdot}^k\hat{f}}[L^2]
    \leq
    \eabs{\Omega}^k\norm{\hat{f}}[L^2]
    =
    \eabs{\Omega}^k\norm{f}[L^2].
\end{align*}

Approximation of $f$ is then equivalent to approximation of $\hat{f}$ over the bounded domain $[-\Omega,\Omega]^d$ due to Parsevals theorem and the linearity of the Fourier transform
\begin{align*}
    \norm{f-f_N}[L^2]
    =
    \norm{\F\{f-f_N\}}[L^2]
    =
    \norm{\hat{f}-\hat{f}_N}[L^2([-\Omega,\Omega]^d)].
\end{align*}
In order to obtain an approximation rate, we also need to impose some smoothness assumption on the function in the frequency domain.
For doing so, we introduce the notion of Barron-Bandlimited functions.
\begin{definition}[Barron-Bandlimited Functions]
    Let $s> 1$, $\Omega>0$.
    We define the class of Barron-Bandlimited functions $\barron*{s}[\Omega]$ as
    \begin{align*}
        \barron*{s}[\Omega]
        :=
        \left\{
            f\in L^1(\rr{d})
        \middle|
            \operatorname{supp}\hat{f}\subseteq [-\Omega,\Omega]^d\,\text{with}\,\hat{f}\in\barron{1}[[-\Omega,\Omega]^d]
        \right\}
    \end{align*}
    and
    \begin{align*}
        \norm{f}[\barron*{s}[\Omega]]:=\norm{\hat{f}}[\barron{s}[[-\Omega,\Omega]^d]].
    \end{align*}
\end{definition}
With that we get the following explicit approximation rate for functions within the Barron-Bandlimited spaces.
\begin{proposition}[$L^2$-Approximation of Barron-Bandlimited Functions]
\label{prop:barron_bandlimited:L2}
    Let $s\in\nn{}_{\geq 2}$, $\Omega>0$,
    and consider a function $\sigma\in W^{s,1}(\rr{})$ and let 
    \begin{align*}
        \Sigma_{N}
        :=
        \left\{
            \sum_{n=1}^N
            \FourierInv{\ch{[-\Omega,\Omega]^d}{\cdot}\hat{\sigma}(\innerProd{w_n}{\cdot}+b_n)}
            \middle|
            w_n\in\rr{d},b_n\in\rr{}
        \right\}
    \end{align*}
    denote the approximation class corresponding to $\hat{\sigma}$.
    Then, for every $f\in\barron*{1}[\Omega]$
    \begin{align*}
        \inf_{f_N\in\Sigma_{N}}
        \norm{f-f_N}[L^2(\rr{d})]
        &
        \lesssim
        N^{-\frac{1}{2}}\norm{f}[\barron*{1}[\Omega]].
    \end{align*}
\end{proposition}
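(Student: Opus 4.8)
The plan is to pass to the frequency domain, where the problem becomes a classical Barron-type $L^2$ approximation by ridge functions, and then to invoke a Maurey/Jones--Barron sampling argument to extract the Monte Carlo rate. Write $Q:=[-\Omega,\Omega]^d$ and $g:=\hat f$. Since the Fourier transform is unitary in our convention and every element of $\Sigma_N$ is by construction the inverse transform of a function supported in $Q$, for $f_N\in\Sigma_N$ with $\widehat{f_N}(\xi)=\chF{Q}(\xi)\sum_{n=1}^N\hat\sigma(\innerProd{w_n}{\xi}+b_n)$ Parseval's identity and $\operatorname{supp}\hat f\subseteq Q$ give
\[
    \norm{f-f_N}[L^2(\rr{d})]=\norm{\hat f-\widehat{f_N}}[L^2(Q)].
\]
As $\norm{g}[\barron{1}[Q]]=\norm{f}[\barron*{1}[\Omega]]$ by definition, the task reduces to approximating $g$ in $L^2(Q)$ by $N$-term combinations drawn from the ridge dictionary $\mathbb{D}:=\{\,\xi\mapsto\hat\sigma(\innerProd{w}{\xi}+b)\mid w\in\rr{d},\,b\in\rr{}\,\}$ restricted to $Q$.

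Next I would record the two properties of the activation bought by the smoothness assumption. The Fourier differentiation rule gives $\abs{\eta}^k\abs{\hat\sigma(\eta)}\lesssim\norm{\sigma^{(k)}}[L^1(\rr{})]$ for every $k\le s$, hence $\abs{\hat\sigma(\eta)}\lesssim(1+\abs{\eta})^{-s}$; with $s\ge 2$ this yields $\hat\sigma\in L^1(\rr{})\cap L^\infty(\rr{})$. In particular $\mathbb{D}$ is uniformly bounded in $L^2(Q)$, since $\norm{\hat\sigma(\innerProd{w}{\cdot}+b)}[L^2(Q)]\le\abs{Q}^{1/2}\norm{\hat\sigma}[L^\infty(\rr{})]$ independently of $w,b$.

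The heart of the argument, and the step I expect to be the main obstacle, is to show that $g$ lies in the variation space $\mathcal{K}(\mathbb{D})$ of $\mathbb{D}$ in $L^2(Q)$ with $\norm{g}[\mathcal{K}(\mathbb{D})]\lesssim\norm{g}[\barron{1}[Q]]$. I would start from a near-infimal extension $g_e$ obeying $\int_{\rr{d}}\abs{\hat g_e(\eta)}(1+\abs{\eta})\dee\eta\le 2\norm{g}[\barron{1}[Q]]$, which produces the spectral representation $g(\xi)=(2\pi)^{-d/2}\int_{\rr{d}}\hat g_e(\eta)e^{i\innerProd{\eta}{\xi}}\dee\eta$ for $\xi\in Q$. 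Each plane wave $\xi\mapsto e^{i\innerProd{\eta}{\xi}}$ is a ridge profile $t\mapsto e^{i\abs{\eta}t}$ in the direction $\eta/\abs{\eta}$, so the remaining work is to reconstruct such a bounded-frequency profile on the compact range $\{\innerProd{\eta/\abs{\eta}}{\xi}:\xi\in Q\}$ as a superposition $\int\hat\sigma(a\,\cdot+b)\dee\nu_\eta(a,b)$ of dilated and translated activations with total mass growing at most linearly in $\abs{\eta}$. The integrability of $\hat\sigma$ secured above is what makes these reconstruction integrals converge, while the linear growth of the mass is exactly matched by the weight $(1+\abs{\eta})$ in the Barron norm; integrating against $\hat g_e$ then assembles a representing measure for $g$ over $\mathbb{D}$ with total variation $\lesssim\norm{g}[\barron{1}[Q]]$. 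Controlling the mass of this profile reconstruction uniformly in $\eta$ is the delicate point.

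Finally, with $g\in\mathcal{K}(\mathbb{D})$ and $\mathbb{D}$ uniformly bounded, I would invoke the Maurey/Jones--Barron lemma in the Hilbert space $L^2(Q)$: a function of variation norm $v$ over a dictionary bounded by $B$ is approximated to accuracy $vB\,N^{-1/2}$ by an $N$-term combination of dictionary atoms. Transporting this combination back through $\mathscr{F}^{-1}$ and multiplication by $\chF{Q}$ realizes an $N$-term approximant of the form defining $\Sigma_N$ (the common amplitude being carried by the atoms), and the bound $v\lesssim\norm{f}[\barron*{1}[\Omega]]$ yields $\inf_{f_N\in\Sigma_N}\norm{f-f_N}[L^2(\rr{d})]\lesssim N^{-1/2}\norm{f}[\barron*{1}[\Omega]]$, as claimed.
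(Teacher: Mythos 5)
Your frequency-domain reduction coincides with the paper's own proof: Parseval together with the fact that both $\hat f$ and $\widehat{f_N}$ are supported in $Q:=[-\Omega,\Omega]^d$ gives $\norm{f-f_N}[L^2(\rr{d})]=\norm{\hat f-\widehat{f_N}}[L^2(Q)]$, the identity $\norm{\hat f}[\barron{1}[Q]]=\norm{f}[\barron*{1}[\Omega]]$ is definitional, and your derivation of $\abs{\hat\sigma(\eta)}\lesssim(1+\abs{\eta})^{-s}$ from $\sigma\in W^{s,1}(\rr{})$ is exactly how the paper uses that hypothesis. The two arguments part ways at the frequency-side approximation itself. The paper does not prove it: it invokes \cite[Theorem 2]{Siegel20ApproximationRatesNeural}, which states precisely that ridge networks built from a bounded activation with polynomial decay of order greater than one (guaranteed here by $s\geq 2$) approximate $\barron{1}[Q]$ in $L^2(Q)$ at the dimension-independent rate $N^{-1/2}$.

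You instead set out to re-derive that theorem via the variation space $\mathcal{K}(\mathbb{D})$ and a Maurey/Jones--Barron sampling argument --- which is indeed the strategy underlying the cited result --- but you leave its core unproven: the claim that the restriction to $Q$ of each plane wave $e^{i\innerProd{\eta}{\cdot}}$ can be written as a superposition of translated and dilated copies of $\hat\sigma$ whose total measure mass is $\lesssim 1+\abs{\eta}$, uniformly in $\eta$. You explicitly label this ``the delicate point'' and offer only the heuristics that $\hat\sigma\in L^1(\rr{})$ makes the reconstruction integrals converge and that linear mass growth matches the Barron weight. That reconstruction, however, is the entire mathematical content of the proposition: it needs a quantitative construction and, among other things, a non-degeneracy property of $\hat\sigma$ (nothing in your sketch rules out activations for which the representing measure cannot have finite mass), so convergence of the integrals alone does not suffice. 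As written, your proof has a genuine hole located exactly where the paper places its citation; it would be completed either by carrying out this profile reconstruction in full (essentially reproving the Siegel--Xu theorem) or by quoting that theorem as the paper does. A final, shared wrinkle: the Maurey argument produces approximants of the form $\frac{v}{N}\sum_{n}d_n$ with a scalar amplitude in front of the atoms, whereas $\Sigma_N$ as defined contains bare sums with no outer coefficients, and $c\,\hat\sigma(\innerProd{w}{\cdot}+b)$ is in general not of the form $\hat\sigma(\innerProd{w'}{\cdot}+b')$; your remark that the amplitude is ``carried by the atoms'' does not resolve this, but since the paper's own proof imports a theorem whose networks do carry coefficients, this mismatch is a defect of the statement rather than of your argument specifically.
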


The previous result highlights the expressivity of the dictionary $\Sigma_N$, showing that shallow networks with activation defined through an affine shift in the Fourier domain and low-pass filtering can approximate Barron-Bandlimited functions with the standard Monte Carlo rate of order $-\frac{1}{2}$.
Note that the rate does not depend on the dimension of the problem, that is, our result is free from the curse of dimensionality.

\begin{proof}
    For $\sigma\in W^{s,1}(\rr{})\subset L^1(\rr{})$ the Fourier transform exists in closed form and $\hat{\sigma}\in L^\infty(\eabs{\cdot}^s;\rr{})$.
    Furthermore, the multiplication with the characteristic function enforces that the resulting function is again in $L^1$ and the inverse Fourier transform can be evaluated in closed form.
    Therefore, the approximation class $\Sigma_N$ is well defined. 
    We now define the class
    \begin{align*}
        \widehat{\Sigma}_{N}
        :=
        \left\{
            \sum_{n=1}^N
            \hat{\sigma}(\innerProd{w_n}{\cdot}+b_n)
        \middle|
            w_n\in\rr{d},
            b_n\in\rr{}
        \right\}.
    \end{align*}
    According to \cite[Theorem 2]{Siegel20ApproximationRatesNeural}, the approximation class $\widehat{\Sigma}_N$ is suitable for approximating $g\in\barron{1}[[-\Omega,\Omega]^d]$ in $L^2([-\Omega,\Omega]^d)$ with dimension-independent rate,
    that is,
    \begin{align*}
        \inf_{g_N\in\widehat{\Sigma}_{N}}
        \norm{g-g_N}[L^2([-\Omega,\Omega]^d)]
        \lesssim
        N^{-\frac{1}{2}}\norm{g}[\barron{1}[[-\Omega,\Omega]^d]].
    \end{align*}
    For $f\in\barron*{1}[\Omega]$ we can identify $g$ with $\hat{f}$ which has bounded support and extend the error norm to the full $\rr{d}$
    \begin{align*}
        \norm{\hat{f}-g_N}[L^2([-\Omega,\Omega]^d)]
        =
        \norm{\hat{f}-g_N\chF{[-\Omega,\Omega]^d}}[L^2(\rr{d})].
    \end{align*}
    Consequently, with Parseval's theorem (note that $\hat{f}$ and $g_N\chF{[-\Omega,\Omega]^d}$ are $L^2(\rr{d})$)
    \begin{align*}
        \norm{\hat{f}-g_N}[L^2([-\Omega,\Omega]^d)]
        =
        \norm{f-\FourierInv{g_N\chF{[-\Omega,\Omega]^d}}}[L^2(\rr{d})].
    \end{align*}
    We can view the approximation class $\Sigma_N$ as the inverse Fourier transform of $\widehat{\Sigma}_N$ in the sense that $\FourierInv{g_N\chF{[-\Omega,\Omega]^d}}\in\Sigma_N$ for all $g_N\in\widehat{\Sigma}_N$.
    Conversely, for every $h_N\in\Sigma_N$ there is a (possibly non-unique) parametrization $\{(w_n,b_n)\}_{n=1}^N$ and a corresponding $g_N\in\widehat{\Sigma}_N$ with the same parametrization for which $\Fourier{h_N}=\chF{[-\Omega,\Omega]^d}g_N$.
    Thus,
    \begin{align*}
        \inf_{f_N\in\Sigma_{N}}
        \norm{\hat{f}-\Fourier{f_N}}[L^2(\rr{d})]
        =
        \inf_{g_N\in\widehat{\Sigma}_{N}}
        \norm{\hat{f}-g_N\chF{[-\Omega,\Omega]^d}}[L^2(\rr{d})].
    \end{align*}
    Combining all the above leads to
    \begin{align*}
        \inf_{f_N\in\Sigma_{N}}
        \norm{f-f_N}[L^2(\rr{d})]
        &
        =
        \inf_{f_N\in\Sigma_{N}}
        \norm{\hat{f}-\Fourier{f_N}}[L^2(\rr{d})]
        \\&
        =
        \inf_{g_N\in\widehat{\Sigma}_{N}}
        \norm{\hat{f}-g_N\chF{[-\Omega,\Omega]^d}}[L^2(\rr{d})]
        \\&
        =
        \inf_{g_N\in\widehat{\Sigma}_{N}}
        \norm{\hat{f}-g_N}[L^2([-\Omega,\Omega]^d)]
        \\&
        \lesssim
        N^{-\frac{1}{2}}\norm{\hat{f}}[\barron{1}[[-\Omega,\Omega]^d]]
        =
        N^{-\frac{1}{2}}\norm{f}[\barron*{1}[\Omega]].
    \end{align*}
\end{proof}
The approximation result for Barron-Bandlimited functions in the \Frechet{} metric $d_{\barron*{s}[\Omega]}$
then reads as follows.
\begin{theorem}[\Frechet{}-Approximation of Barron-Bandlimited Functions]
\label{prop:barron_bandlimited:Frechet}
    Let $s> 1$, $\Omega\in\rr{}$, $\Omega>0$,
    and consider a function $\sigma\in W^{s,1}(\rr{})$ and let 
    \begin{align*}
        \Sigma_{N}
        :=
        \left\{
            \sum_{n=1}^N
            \FourierInv{\ch{[-\Omega,\Omega]^d}{\cdot}\hat{\sigma}(\innerProd{w_n}{\cdot}+b_n)}
            \middle|
            w_n\in\rr{d},b_n\in\rr{}
        \right\}
    \end{align*}
    denote the approximation class corresponding to $\hat{\sigma}$.
    Then, for every $f\in\barron*{1}[\Omega]$ and any $\varepsilon>0$
    it is sufficient to choose
    \begin{align*}
        N
        \geq
        N_{\ell_\varepsilon}
        :=
        \left\lceil 
           \left(
                \frac{2^{-\ell_\varepsilon}}{\norm{f}[\barron*{1}[\Omega]]M_{\ell_\varepsilon}}
            \right)^{-2}
        \right\rceil
        \qquad\text{with}\qquad
        \ell_\varepsilon:=\lceil-\log_2(\varepsilon)\rceil+1
    \end{align*}
    with $M_{\ell_\varepsilon}=\eabs{\Omega}^{\ell_\varepsilon}$
    in order to achieve
    \begin{align*}
        \inf_{f_\nNeuron\in\Sigma_\nNeuron} d_{\targetSpace{}}(f-f_\nNeuron)
        =
        \inf_{f_\nNeuron\in\Sigma_\nNeuron}
            \sum_{\ell=0}^\infty\frac{1}{2^\ell}
            \frac{\seminorm{f-f_\nNeuron}[\ell]}
                {1+\seminorm{f-f_\nNeuron}[\ell]}
        < \varepsilon.
    \end{align*}
\end{theorem}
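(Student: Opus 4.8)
The plan is to realize this statement as a direct specialization of \cref{thm:approximation_Frechet_metric:downwards}, taking the separating sequence of semi-norms to be the Sobolev norms $\seminorm{\cdot}[\ell]=\norm{\cdot}[H^\ell(\rr{d})]$ on the bandlimited target space $\barron*{1}[\Omega]$ (separation is immediate since $\seminorm{\cdot}[0]=\norm{\cdot}[L^2(\rr{d})]$ is already a norm). The first task is to verify the bounded-growth-condition \cref{eq:frechet:MlBound:downwards}, which is exactly the estimate recorded just before the theorem: for every $f\in\barron*{1}[\Omega]$ the support condition $\operatorname{supp}\hat{f}\subseteq[-\Omega,\Omega]^d$ together with Parseval's theorem gives
\begin{align*}
    \norm{f}[H^\ell(\rr{d})]
    =
    \norm{\eabs{\cdot}^\ell\hat{f}}[L^2(\rr{d})]
    \leq
    \eabs{\Omega}^\ell\norm{\hat{f}}[L^2(\rr{d})]
    =
    \eabs{\Omega}^\ell\norm{f}[L^2(\rr{d})],
\end{align*}
so the condition holds with $M_\ell=\eabs{\Omega}^\ell$. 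Since $\eabs{\Omega}=(1+\abs{\Omega}^2)^{1/2}\geq 1$, the sequence $\{M_\ell\}_{\ell=0}^\infty$ is non-decreasing, as the theorem requires. Because every element of $\Sigma_N$ is also bandlimited with frequency support in $[-\Omega,\Omega]^d$, the difference $f-f_\nNeuron$ stays bandlimited and the same estimate applies to the approximation error, so each Sobolev norm in the \Frechet{} sum is finite.

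Next I would supply the approximation rate for the first semi-norm. By \cref{prop:barron_bandlimited:L2}, the class $\Sigma_N$ attains the Monte Carlo rate
\begin{align*}
    \inf_{f_\nNeuron\in\Sigma_N}\norm{f-f_\nNeuron}[L^2(\rr{d})]
    \lesssim
    N^{-\frac{1}{2}}\norm{f}[\barron*{1}[\Omega]],
\end{align*}
which is precisely hypothesis \cref{eq:assumption_rate:downwards} with rate function $r(\nNeuron)=\nNeuron^{-1/2}$ and constant $C_f=\norm{f}[\barron*{1}[\Omega]]$. This $r$ is decreasing and maps $[1,\infty)$ bijectively onto $(0,1]$, so $\bar{r}=r(1)=1$ in agreement with the remark following \cref{thm:approximation_Frechet_metric:upwards}, and its inverse is $r^{-1}(y)=y^{-2}$.

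With these ingredients in place I would invoke \cref{thm:approximation_Frechet_metric:downwards} verbatim. Substituting $r(1)=1$, $C_f=\norm{f}[\barron*{1}[\Omega]]$, $M_{\ell_\varepsilon}=\eabs{\Omega}^{\ell_\varepsilon}$, and $r^{-1}(y)=y^{-2}$ into the closed-form threshold of that theorem gives
\begin{align*}
    N_{\ell_\varepsilon}
    =
    \left\lceil
        r^{-1}\!\left(\min\!\left\{1,\frac{2^{-\ell_\varepsilon}}{\norm{f}[\barron*{1}[\Omega]]M_{\ell_\varepsilon}}\right\}\right)
    \right\rceil
    =
    \left\lceil
        \left(\frac{2^{-\ell_\varepsilon}}{\norm{f}[\barron*{1}[\Omega]]M_{\ell_\varepsilon}}\right)^{-2}
    \right\rceil,
\end{align*}
which is the asserted bound once the minimum selects its second argument. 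The proof is therefore essentially a clean bookkeeping specialization; the only genuine points requiring care — the main obstacle — are the identification of the ambient space on which \cref{thm:approximation_Frechet_metric:downwards} acts, namely checking $\Sigma_N\subset\barron*{1}[\Omega]$ (which follows from $\sigma\in W^{s,1}(\rr{})$ forcing $\hat{\sigma}$ into $\barron{1}$ on the compact window $[-\Omega,\Omega]^d$, as already exploited in the proof of \cref{prop:barron_bandlimited:L2}), and the mild observation that the minimum indeed selects its second branch for the relevant range of $\varepsilon$, i.e.\ whenever $2^{-\ell_\varepsilon}\leq\norm{f}[\barron*{1}[\Omega]]\eabs{\Omega}^{\ell_\varepsilon}$. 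Everything else is inherited directly from the general theorem.
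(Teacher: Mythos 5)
Your proposal is correct and takes essentially the same route as the paper: the paper's own proof consists of exactly this specialization, invoking \cref{thm:approximation_Frechet_metric:downwards} together with \cref{prop:barron_bandlimited:L2} with $r(N)=N^{-\frac{1}{2}}$ and $C_f=\norm{f}[\barron*{1}[\Omega]]$. Your additional bookkeeping (the growth constants $M_\ell=\eabs{\Omega}^{\ell}$ from bandlimitedness, the check that $\Sigma_N$ stays bandlimited so the growth condition applies to the error, and the observation about which branch of the minimum is active) is a faithful expansion of details the paper leaves implicit, not a different argument.
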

\begin{proof}
    This result follows immediately from \cref{thm:approximation_Frechet_metric:downwards,prop:barron_bandlimited:L2}, where $r(N) = N^{-\frac{1}{2}}$ and $C_f=\norm{f}[\barron*{1}[\Omega]]$.
\end{proof}
The previous theorem provides a lower bound on the needed number of neurons $N$ in order to achieve a predefined approximation error $\epsilon$. The approximation error is quantified in  a \Frechet{} metric which provides a fine-grained measure of approximation quality. It is worth mentioning that the result does not suffer from the curse of dimensionality.

\printbibliography

\appendix

\section{Observations on the Exponential Spectral Barron Space}
\label{app:structure_barron}

\begin{proposition}
    The $L^2(\set{U})$-unit ball is unbounded in $\barron{\beta,c}[\set{U}]$ when $\beta\in(0,1)$, $c>0$ and $\set{U}\subseteq \rr{d}$.
\end{proposition}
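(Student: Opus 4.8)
The plan is to produce an explicit sequence $\{f_n\}_{n\in\nn{}}$ that lies on the unit sphere of $L^2(\set{U})$ yet whose Barron norms diverge, so that $\sup\{\norm{f}[\barron{\beta,c}[\set{U}]]:\norm{f}[L^2(\set{U})]\leq 1\}=\infty$. The natural candidate is the oscillatory family already studied in \cref{prop:expSBS:counterexample}, namely (in one dimension) $f_n(x)=\frac{1}{\sqrt{\pi}}\sin(nx)$, extended to $\rr{d}$ by tensorisation and, for a general domain, placed inside a sub-cube of $\set{U}$ after translation and rescaling. The smooth-truncation argument carried out in the proof of \cref{prop:expSBS:counterexample} already shows both that $\norm{f_n}[L^2(\set{U})]=1$ and that each $f_n$ has finite Barron norm, so these are genuine elements of $\barron{\beta,c}[\set{U}]$ sitting in the $L^2(\set{U})$-unit ball.

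The decisive step is a lower bound on $\norm{f_n}[\barron{\beta,c}[\set{U}]]$, and here I would appeal to \cref{lem:symbol_embedding} rather than estimate the Barron norm directly. Rearranging its pointwise estimate gives, for every multi-index $\alpha$ and every $x\in\set{U}$,
\[
\norm{f}[\barron{\beta,c}[\set{U}]]\geq (c\beta)^{\frac{\abs{\alpha}}{\beta}}\frac{\abs{\partial^\alpha f(x)}}{(\abs{\alpha}!)^{1/\beta}}.
\]
The key observation is that a single first-order derivative already suffices: taking $\abs{\alpha}=1$ and maximising over $x$ yields
\[
\norm{f_n}[\barron{\beta,c}[\set{U}]]\geq (c\beta)^{1/\beta}\sup_{x\in\set{U}}\abs{\partial f_n(x)}.
\]
Since $\abs{\partial f_n(x)}=\frac{n}{\sqrt{\pi}}\abs{\cos(nx)}$ reaches $\frac{n}{\sqrt{\pi}}$ somewhere in the domain for every $n\geq 1$, we obtain $\norm{f_n}[\barron{\beta,c}[\set{U}]]\gtrsim n\to\infty$, which establishes the claimed unboundedness.

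I expect the only delicate point to be the reduction to a general domain: \cref{prop:expSBS:counterexample} is phrased for $\set{U}=[0,2\pi]^d$, whereas here $\set{U}\subseteq\rr{d}$ is arbitrary, so I would first pass to a sub-cube on which the oscillatory construction and its truncation remain valid, and check that normalising $f_n$ to unit $L^2(\set{U})$-norm costs only a harmless multiplicative constant (the $L^2$-mass of $\sin(n\,\cdot\,)$ over a fixed interval tends to half its length as $n\to\infty$). Once membership and this normalisation are secured, the divergence follows at once from the first-order instance of \cref{lem:symbol_embedding}; in particular, no optimisation over the order of differentiation — and hence no Stirling-type estimate — is required, which is precisely what keeps the argument short.
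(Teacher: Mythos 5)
Your proposal is correct, and while it uses the same witnesses and the same key lemma as the paper, it executes the decisive estimate in a genuinely more elementary way. The paper's proof (in the appendix) also takes unit-$L^2$ oscillations (there $\frac{1}{\sqrt{\pi}}\cos(nx)$ rather than $\frac{1}{\sqrt{\pi}}\sin(nx)$, an immaterial difference) and also lower-bounds the Barron norm through \cref{lem:symbol_embedding}; but it first converts the pointwise bound into an $L^2$ bound on the $k$-th derivative (picking up a factor $\abs{\set{U}}^{\frac{1}{2}}$, hence implicitly assuming finite measure), uses the exact identity $\norm{f_n^{(k)}}[L^2(\set{U})]=n^k$, and then optimizes the resulting family of lower bounds over the derivative order $k$, whose maximum at $k=\lfloor c\beta n^\beta\rfloor$ is handled with a Stirling estimate. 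You instead freeze $\abs{\alpha}=1$ and take the supremum of the first derivative, getting $\norm{f_n}[\barron{\beta,c}[\set{U}]]\geq (c\beta)^{\frac{1}{\beta}}n/\sqrt{\pi}$ directly; since the proposition only claims unboundedness, linear divergence suffices, and your shortcut legitimately removes both the optimization over $k$ and Stirling's formula. What the paper's heavier computation buys is sharpness: its lower bound grows essentially like $e^{cn^\beta}$, matching up to algebraic factors the upper bound $e^{cn^\beta}/n$ coming from the truncation argument in \cref{prop:expSBS:counterexample}, so it identifies the actual growth rate of these Barron norms rather than mere divergence. Two further points are in your favour: you explicitly secure membership $f_n\in\barron{\beta,c}[\set{U}]$ by citing the truncation construction of \cref{prop:expSBS:counterexample} — which is in fact where the hypothesis $\beta\in(0,1)$ enters, since for $\beta\geq 1$ these restricted oscillations have no $L^1$ extension of finite Barron norm — a step the paper's appendix leaves entirely implicit; and you at least flag the reduction from a general domain $\set{U}\subseteq\rr{d}$ to a sub-cube, whereas the paper silently works only on $[0,2\pi]$.
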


Let $f\in \barron{\beta,c}[\set{U}]$, we first extend the pointwise bound
\begin{align}
\label{eq:counterexample:uniformMlBound}
    \abs{f_n^{(k)}(x)}
    \leq
    \norm{f_n}[\barron{\beta,c}]
    \left(\frac{1}{c\beta}\right)^{\frac{k}{\beta}}\left(k!\right)^\frac{1}{\beta},
\end{align}
from \cref{lem:symbol_embedding} on the $k$-th derivative to the $L^2$-bound
\begin{align}
\label{eq:counterexample:L2MlBound}
    \norm{f_n^{(k)}}[L^2(\set{U})]
    \leq
    \abs{\set{U}}^{\frac{1}{2}}
    \norm{f_n}[\barron{\beta,c}]
    \left(\frac{1}{c\beta}\right)^{\frac{k}{\beta}}\left(k!\right)^\frac{1}{\beta}.
\end{align}
For $\set{U}=[0,2\pi]$ and the sequence of functions $\{f_n\}_{n\in\nn{}}$, $f_n(x)=\frac{1}{\sqrt{\pi}}\cos(nx)$ we have for all $k\in\zzp{}$
\begin{align*}
    n^k
    =
    \norm{f_n^{(k)}}[L^2(\set{U})]
    \leq
    \abs{\set{U}}^{\frac{1}{2}}
    \norm{f_n}[\barron{\beta,c}]
    \left(\frac{1}{c\beta}\right)^{\frac{k}{\beta}}\left(k!\right)^\frac{1}{\beta}
\end{align*}
and equivalently
\begin{align*}
    \norm{f_n}[\barron{\beta,c}]
    \geq
    \frac{1}{
    \abs{\set{U}}^{\frac{1}{2}}}
    \left(n^\beta c\beta\right)^\frac{k}{\beta}
    \left(\frac{1}{k!}\right)^\frac{1}{\beta}.
\end{align*}
This expression is increasing over $k$ as long as $n^\beta c\beta\leq k$ and decreasing otherwise.
Thus, its maximum is attained for $k=\lfloor n^\beta c \beta\rfloor$ and we get the lower bound
\begin{align*}
    \norm{f_n}[\barron{\beta,c}]
    &\geq
    \frac{1}{\abs{\set{U}}^{\frac{1}{2}}}
    \left(n^\beta c\beta\right)^\frac{\lfloor n^\beta c \beta\rfloor}{\beta}
    \left(\frac{1}{\lfloor n^\beta c \beta\rfloor!}\right)^\frac{1}{\beta}
    \\&
    \geq
    \frac{1}{\abs{\set{U}}^{\frac{1}{2}}}
    \left\lfloor n^\beta c\beta\right\rfloor^\frac{\lfloor n^\beta c \beta\rfloor}{\beta}
    \left(\frac{1}{\lfloor n^\beta c \beta\rfloor!}\right)^\frac{1}{\beta}
    \\&
    \geq
    \frac{1}{\abs{\set{U}}^{\frac{1}{2}}}
    \left(
    \frac{1}{\sqrt{2\pi}\left\lfloor n^\beta c\beta\right\rfloor}
    e^{\left\lfloor n^\beta c\beta\right\rfloor-1}
    \right)^\frac{1}{\beta},
\end{align*}
where we used the Stirling bound in the last step.
The lower bound diverges for $n\to\infty$, thus, the sequence of functions is unbounded in $\barron{\beta,c}[\set{U}]$ while $\norm{f_n}[L^2(\set{U})]=1$.

\section{Observations on the Weighted Gevrey Class}
\label{app:structure_gevrey}

In \cref{sec:gevrey} we studied the self-weighted Gevrey class which fulfils the condition
\begin{align*}
    \abs{\partial^\alpha f(x)}\lesssim h^{\abs{\alpha}}\abs{\alpha} !^sf(x).
\end{align*}
We now provide some insight into the structure of this class.

For $d=1$ the pointwise inequality means that for any $x_0$ with $f(x_0)=0$ we get that all derivatives are zero as well at this position.
An immediate consequence of that is, that the cosine-approximation class is not in this space as not all of its derivatives are zero at all $x_0$.

For the analytic cases with $\beta\geq 1$ this would immediately imply that the analytic extension in this point is the zero function.
Due to the uniqueness of the extension, we can conclude that any function in this space must be either strictly positive/negative or the zero-function.

For the non-analytic cases with $\beta<1$ the class of functions is allowed to contain mollifiers that are zero outside some bounded domain.
This means, if $f(x_0)=0$, then $x_0$ lies between two bump functions and therefore, the class of functions can be a spatial concatenation of bump functions.

\paragraph{Disjoint Bump Functions}
In order to apply the $M_\ell$ inequality on the error $f-f_N$, we need to either show that it holds for the error, or we need to show that it holds for $f$ and $f_N$ and that the space of functions is a vector space.
One way to get the latter together with the Gevrey-class is to build a set of functions that consists of a fixed set of non-overlapping bump-functions, where each individual bump is scaled by a single scalar value.
The consequence of this construction is, that each of these functions can be identified by the corresponding list of scalar values.
Approximation in this space could then for example mean that we have to select $N$ of the scalar values, which we then optimize.

There cannot be uncountably many bump functions. The argument against that is a very simple, standard-argument for the fact that there cannot be uncountably many open intervals in a partition of $\rr{}$.
The support of each bump function must be an open and non-empty interval.
Therefore, it contains at least one rational number.
By mapping from the interval to a selected rational number from within the interval, we have an injective mapping from the intervals to the rational numbers.
Therefore, the inverse exists on a subset of the rational numbers.
The domain of this inverse is countable, therefore, the number of intervals must be countable.

By means of an argument with a homogeneous linear first-order differential equation, we can argue that there is no bump function with bounded support in $\Gamma_s$.

Let $f\in\Gamma_s$, then 
\begin{align*}
    \abs{D^\ell f(x)}\leq M_{\ell} \abs{f(x)}
\end{align*}
this means
we get that $f$ satisfies the following homogeneous differential equation
\begin{align*}
    D^\ell f(x)=g_{f,\ell}(x)f(x), \text{ where }f(x)\neq 0.
\end{align*}
Here we exclude all the points $x$ such that $f(x)=0$, since
when $f(x)=0$ it follows that $D^\ell f(x)=0$ for all $\ell\in\nn{}$.
A valid choice for $g_{f,\ell}$ is 
\begin{align*}
    g_{f,\ell}(x)
    =
    \begin{cases}
        \frac{D^\ell f(x)}{f(x)},&f(x)\neq 0\\
        0,&\text{otherwise},
    \end{cases}
\end{align*}
We observe that $g_{f,\ell}$ is bounded
(more precisely, $\abs{g_{f,\ell}(x)}\leq M_{\ell}$)
and it has antiderivative $G_{f,\ell}$ almost everywhere (as $f\in C^\infty)$.
With that, we can write
\begin{align*}
    f(x)=
    \begin{cases}
        e^{G_{f,1}(x)+K}, & f(x)\neq 0,\\
        0, &\text{otherwise}
    \end{cases}
\end{align*}
for some $K\in\rr{}$.

Assume that $f(x_0)=0$ for some finite $x_0$,
then due to the continuity of $G_{f,\ell}$
we have $\lim_{x\to x_0}G_{f,1}(x)=-\infty$
which is a contradiction to the boundedness of $g_{f,1}$ by the fundamental theorem of calculus.

\begin{proposition}
    Weighted Gevrey class of functions is either trivial or intractable to approximate.
\end{proposition}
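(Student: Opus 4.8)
The plan is to read the qualitative claim through the lens of \cref{thm:approximation_Frechet_metric:downwards}: that theorem requires the target space $\targetSpace{}$ to be a \emph{vector space} carrying the Sobolev growth bound \cref{eq:frechet:MlBound:downwards}. I would therefore interpret \emph{trivial} to mean that every vector subspace of $\Gamma_s$ is at most one-dimensional (so nothing beyond scalar multiples of a single function can be represented), and \emph{intractable} to mean that enlarging the target beyond $\Gamma_s$ destroys the growth condition on the error $f-f_N$, so that the hypotheses of \cref{thm:approximation_Frechet_metric:downwards} cannot be met. As in \cref{exa:gauss}, I would work on a connected one-dimensional domain, the higher-dimensional case following by the tensor-product structure.

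The load-bearing step is a rigidity lemma: \emph{every non-zero $f\in\Gamma_s$ is nowhere vanishing}. This is exactly the ODE argument already sketched above the statement. From the self-weighting bound with $\ell=1$ one has $|Df(x)|\leq M_1|f(x)|$, so $g:=Df/f$ is bounded by $M_1$ on the open set $\{f\neq 0\}$; integrating gives $f=\exp(G+K)$ on each maximal non-vanishing interval, and a finite endpoint zero $x_0$ would force the antiderivative $G$ to diverge to $-\infty$, contradicting $|g|\leq M_1$ through the fundamental theorem of calculus. Hence a zero of $f$ at any interior point forces $f\equiv 0$ on the connected domain.

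With rigidity in hand, the triviality horn is immediate. Let $W\subseteq\Gamma_s$ be any vector subspace and suppose $f,g\in W$ are linearly independent. Fix a base point $x_0$ and set $h:=g(x_0)f-f(x_0)g$; since $W$ is a vector space, $h\in W\subseteq\Gamma_s$, and by construction $h(x_0)=0$. Rigidity then forces $h\equiv 0$, i.e.\ $g(x_0)f=f(x_0)g$, contradicting linear independence. Therefore $\dim W\leq 1$, so the only target classes living inside $\Gamma_s$ are $\{0\}$ and single lines $\rr{}\cdot f$ of sign-definite functions --- trivial for approximation. For the intractability horn, I would note that any genuinely useful target or approximation family (for instance the cosine dictionary $\Sigma_{N,M}$ of \cref{eq:ExpSpecBarronSpace}, whose members vanish where their derivatives do not, or any disjoint-bump construction, ruled out by rigidity) lies outside $\Gamma_s$; and because $\Gamma_s$ is not a vector space, the error $f-f_N$ need not belong to $\Gamma_s$, so the growth bound \cref{eq:frechet:MlBound:downwards} --- which the proof of \cref{thm:approximation_Frechet_metric:downwards} applies precisely to $f-f_N$ --- is unavailable. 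The theorem is then inapplicable and approximation in the \Frechet{} metric $d_{\targetSpace{}}$ via this route is intractable.

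The two horns together give the stated alternative. I expect the genuine difficulty to be conceptual rather than computational: the rigidity estimate is short, but one must pin down workable definitions of \emph{trivial} and \emph{intractable} and then rule out an intermediate regime in which some rich vector space simultaneously sits inside $\Gamma_s$ and carries the growth condition. The subtle point --- and the reason the argument has to be phrased through vector \emph{subspaces} --- is that rigidity can only be invoked on linear combinations that are themselves known to lie in $\Gamma_s$; it is precisely the requirement that $\targetSpace{}$ be a vector space in \cref{thm:approximation_Frechet_metric:downwards} that both licenses the $\dim\leq 1$ collapse and, when violated, produces the intractability.
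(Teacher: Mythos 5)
Your proposal is correct in substance and rests on the same load-bearing step as the paper: the rigidity lemma (a non-zero $f\in\Gamma_s$ cannot vanish anywhere on a connected domain), proved via the first-order ODE argument with bounded coefficient $g_{f,1}=Df/f$, which you correctly identify with the discussion preceding the statement. The paper never goes beyond that discussion --- its stated proof is literally a pointer back to the appendix, which establishes rigidity, notes that the cosine dictionary and all bump constructions are thereby excluded, and (for the analytic regime) invokes uniqueness of real-analytic extensions to conclude sign-definiteness. What you do differently is to convert rigidity into a precise triviality statement: for linearly independent $f,g$ in a vector subspace $W\subseteq\Gamma_s$, the combination $h=g(x_0)f-f(x_0)g$ lies in $\Gamma_s$, vanishes at $x_0$, hence is identically zero, forcing $\dim W\leq 1$. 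This collapse of subspaces is not in the paper and is a genuine sharpening: it gives ``trivial'' a checkable meaning, whereas the paper leaves the matter at the informal level of sign-definite functions and impossible bump concatenations, together with a countability digression that your argument renders unnecessary. Your intractability horn --- the natural dictionaries lie outside $\Gamma_s$, and without a vector space structure the growth bound \cref{eq:frechet:MlBound:downwards} cannot be transferred to the error $f-f_N$, so \cref{thm:approximation_Frechet_metric:downwards} is inapplicable --- is exactly the paper's own reasoning in its paragraph on disjoint bump functions, made explicit. One caveat applies to both: since the proposition never defines ``trivial'' or ``intractable'', neither your two-horn argument nor the paper's discussion is a proof in the strict sense; yours at least fixes definitions under which the claim becomes a provable statement, at the modest price that ``intractable'' then only means the specific route through \cref{thm:approximation_Frechet_metric:downwards} is blocked, not that every conceivable approximation scheme must fail.
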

\begin{proof}
    The proof could be found in the Appendix, where we provide examples and discussions regarding the triviality of using weighted Gevrey regular functions.
    Further we investigate the fact that using these classes could also be impossible to approximate in some situations.
\end{proof}

\end{document}